\documentclass[11pt]{article}
\usepackage{graphicx}
\usepackage[a4paper,margin=27mm]{geometry}
\usepackage[T1]{fontenc}
\usepackage[utf8]{inputenc}
\usepackage{amsmath,amssymb,amsthm,mathtools,bm}
\usepackage{lmodern}
\usepackage{microtype}
\usepackage{booktabs,tabularx,array}
\usepackage{enumitem}
\usepackage{xcolor}
\usepackage{titlesec}
\usepackage{fancyhdr}
\usepackage{hyperref}
\usepackage[nameinlink,capitalise,noabbrev]{cleveref}

\definecolor{navy}{RGB}{22,57,95}
\definecolor{teal}{RGB}{0,108,117}
\newcommand{\REV}[1]{#1}
\hypersetup{colorlinks=true,linkcolor=navy,citecolor=teal,urlcolor=teal,
  pdftitle={Geometric Constants in Banach Tensor Products}}
\titleformat{\section}{\large\bfseries\color{navy}}{\thesection.}{0.65em}{}
\setlist{leftmargin=2em,itemsep=2pt,topsep=4pt}
\allowdisplaybreaks
\newtheorem{theorem}{Theorem}[section]
\newtheorem{proposition}[theorem]{Proposition}
\newtheorem{lemma}[theorem]{Lemma}
\newtheorem{corollary}[theorem]{Corollary}

\theoremstyle{definition}
\newtheorem{definition}[theorem]{Definition}
\newtheorem{example}[theorem]{Example}

\theoremstyle{remark}
\newtheorem{remark}[theorem]{Remark}

\crefname{theorem}{Theorem}{Theorems}
\crefname{proposition}{Proposition}{Propositions}
\crefname{lemma}{Lemma}{Lemmas}
\crefname{corollary}{Corollary}{Corollaries}
\crefname{definition}{Definition}{Definitions}
\crefname{example}{Example}{Examples}
\crefname{remark}{Remark}{Remarks}
\crefname{equation}{equation}{equations}
\crefname{figure}{Figure}{Figures}

\crefalias{example}{example}

\newcommand{\CNJ}{C_{\mathrm{NJ}}}
\newcommand{\tCNJ}{\widetilde C_{\mathrm{NJ}}}
\newcommand{\eps}{\varepsilon}
\newcommand{\oteps}{\widehat\otimes_{\eps}}
\newcommand{\otpi}{\widehat\otimes_{\pi}}
\newcommand{\Lop}{\mathcal L}

\newcommand{\R}{\mathbb R}

\usepackage{authblk}

\title{\bfseries Von Neumann–Jordan Constants in Banach Tensor Products}

\author[1]{Wenwen Zhang}
\author[1]{Ying Xu}
\author[1]{Qi Liu\thanks{Corresponding author: liuq67@aqnu.edu.cn}}
\author[2]{Yongjin Li}

\affil[1]{School of Mathematics and Statistics, Anqing Normal University,
	Anqing 246133, China}

\affil[2]{Department of Mathematics, Sun Yat-sen University,
	Guangzhou 510275, China}
\date{}

\begin{document}
	
	\maketitle
	
	\begin{center}
		Email: Y26220004@stu.aqnu.edu.cn; 
		Y26220046@stu.aqnu.edu.cn;
		liuq67@aqnu.edu.cn; 
		stslyj@mail.sysu.edu.cn
	\end{center}

\begin{abstract}
	We investigate the von Neumann--Jordan constant of Banach tensor products equipped with the injective and projective tensor norms. We first establish lower estimates in terms of the geometric properties of the factor spaces and show that, whenever both factors are infinite-dimensional, the von Neumann--Jordan constants of both canonical tensor products attain the maximal value \(2\). For finite-dimensional factors, we obtain explicit results for classical sequence spaces and characterize the extremal case through operator-space and orthogonal-contact conditions. We further derive dimension-sensitive estimates based on Euclidean sections. Finally, we study the constant under equivalent renormings and determine its exact renorming envelope for injective and projective tensor products. As consequences, we characterize when these tensor products are superreflexive and when they are weak Hilbert spaces. These results clarify the influence of tensor norms, dimension, and renorming on the von Neumann--Jordan geometry of Banach tensor products.
\end{abstract}

\noindent\textbf{Keywords:} von Neumann--Jordan constant; injective tensor product; projective tensor product; superreflexivity.

\medskip
\noindent\textbf{2020 Mathematics Subject Classification:} 46B20.

\section{Introduction}

The parallelogram identity provides a fundamental characterization of inner product spaces and serves as a basic tool for describing the geometry of Banach spaces.Motivated by this characterization, Clarkson introduced the von Neumann--Jordan constant to quantify the deviation of a Banach space from the parallelogram identity\cite{Clarkson1937}. For a Banach space $X$, the constant satisfies
$1\leq \CNJ(X)\leq2$, where the equality $\CNJ(X)=1$ characterizes Hilbert
spaces, while $\CNJ(X)=2$ corresponds to the maximal possible value of this constant.
\cite{Clarkson1937,JordanVonNeumann1935}.

Subsequent studies have investigated the relations between the von Neumann--Jordan constant and various geometric properties of Banach spaces, including uniform non-squareness, normal structure, and related geometric properties of Banach spaces. In particular,
estimates involving this constant have been used to characterize uniform
non-squareness, normal structure, and other geometric features of Banach
spaces
\cite{KatoMaligrandaTakahashi2001,DhompongsaPiraisangjunSaejung2003}.
Moreover, explicit computations and sharp estimates for special classes of
spaces, including finite-dimensional spaces and classical function spaces,
have further clarified the geometric meaning of this constant
\cite{HashimotoNakamura2009,Mizuguchi2021}.

It is worth emphasizing that the von Neumann--Jordan constant plays an
important role in the study of the geometry of Banach spaces and has led to
a variety of interesting applications and results. In particular, it has been
investigated in connection with the structure of Banach spaces under equivalent
renormings, approximate versions of the Jordan--von Neumann theorem for
finite-dimensional real normed spaces, and its relationship with other
geometric constants, such as the James constant
\cite{HashimotoNakamura2009,Passer2015,Wang2010,KatoTakahashi2009}.
Furthermore, the von Neumann--Jordan constant has been extensively studied for
various classes of Banach spaces, where it provides valuable information on
their geometric structures. Explicit estimates and exact values have been
obtained for several important spaces, including Day--James spaces,
Bana\`s--Fr\c{a}czek spaces, and Lorentz sequence spaces
\cite{YangWang2016,Yang2014,KatoMaligrandaTakahashi2001}.
Moreover, its connections with other geometric properties, such as roundness
and uniform normal structure, have been investigated through generalized forms
of the Jordan--von Neumann constant
\cite{AminiHarandiDoustRobertson2021,DhompongsaPiraisangjunSaejung2003}.
These results demonstrate the effectiveness of the von Neumann--Jordan constant
as a tool for analyzing the nonlinear geometry of Banach spaces. For specific
results and further developments, we refer the reader to the relevant references.

Tensor products provide a fundamental framework for constructing new Banach spaces, and the theory of tensor norms has been systematically developed in the classical monographs \cite{Ryan2002,DefantFloret1993}.  Different tensor norms may produce essentially different norm geometries on the resulting tensor product spaces. In particular, the injective and projective
tensor norms represent two important tensor constructions, and the behavior
of the von Neumann--Jordan constant under these norms is not immediate.
Although the von Neumann--Jordan constant has been extensively studied in individual Banach spaces, much less is known about its behavior in tensor product spaces. In particular, it remains unclear how the choice of tensor norms and the geometric properties of the factor spaces influence this constant.These questions motivate our investigation of how the geometry of factor spaces is reflected in their tensor products.

The paper is organized as follows. Section 2 collects the notation and
preliminary facts about the von Neumann--Jordan constant, tensor norms and
related geometric quantities. The subsequent sections develop the main
results on finite-dimensional rigidity, infinite-dimensional extremality,
and further geometric consequences of tensor products.

\section{Preliminaries}

\REV{Throughout the paper, $X$ and $Y$ denote Banach spaces over 
	$\mathbb K$, where $\mathbb K=\mathbb R$ or $\mathbb C$. Tensor products are
	understood with respect to their canonical completions. We begin with the
	von Neumann--Jordan constant}

\begin{equation}\label{eq:def-cnj}
	\CNJ(X)=\sup
	\left\{
	\frac{\|x+y\|^{2}+\|x-y\|^{2}}
	{2(\|x\|^{2}+\|y\|^{2})}
	:\ x,y\in X,\ (x,y)\ne(0,0)
	\right\}.
\end{equation}

\REV{The quantity $\CNJ(X)$ is referred to as the von Neumann--Jordan constant.
	This constant and its basic properties have been extensively studied in
	Banach space theory; see, for example,
	\cite{Clarkson1937,JordanVonNeumann1935,KatoTakahashi1997}.
	We recall several fundamental properties that will be used later:
	\begin{enumerate}[label=\textup{(\roman*)}]
		\item $1\leq \CNJ(X)\leq 2$.
		\item $X$ is a Hilbert space if and only if $\CNJ(X)=1$.
		\item $X$ is uniformly non-square if and only if $\CNJ(X)<2$.
		\item $\CNJ(X)=\CNJ(X^*)$.
\end{enumerate}}

We shall also use the James constant and the Sch\"affer constant, defined by
\[
J(X)=\sup_{x,y\in S_X}\min\{\|x+y\|,\|x-y\|\},
\qquad
S(X)=\inf_{x,y\in S_X}\max\{\|x+y\|,\|x-y\|\},
\]
which are related by
\[
J(X)S(X)=2.
\]
\REV{We shall use the relation above together with the standard estimates
	recorded in \cite{James1964,KatoMaligrandaTakahashi2001}.}

\REV{We first record several standard properties of the von Neumann--Jordan
	constant.}

\begin{lemma}\label{lem:basic-cnj}
	If $E$ is an isometric subspace of $X$, then
	\[
	\CNJ(E)\le\CNJ(X).
	\]
	Moreover,
	\[
	\CNJ(X)=\CNJ(X^\ast),
	\qquad
	\CNJ(X)=\sup_{\substack{E\subset X\\ \dim E\le2}}\CNJ(E).
	\]
\end{lemma}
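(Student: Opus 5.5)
Three claims are involved. Monotonicity under isometric subspaces and the two-dimensional reduction follow directly from the definition \eqref{eq:def-cnj}. The duality $\CNJ(X)=\CNJ(X^\ast)$ needs a separate argument.

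\emph{Monotonicity and two-dimensional reduction.} If $E\subset X$ carries the restricted norm, every admissible pair $(x,y)\in E\times E$ is admissible in $X$ with the same quotient. The supremum over a smaller set is smaller, so $\CNJ(E)\le\CNJ(X)$. For the second formula, the inequality "$\ge$" is monotonicity applied to each $E$ with $\dim E\le 2$. For "$\le$", fix $(x,y)\neq(0,0)$ and set $E=\operatorname{span}\{x,y\}$, so $\dim E\le2$. Then the quotient at $(x,y)$ is at most $\CNJ(E)$, and taking the supremum over $(x,y)$ gives the claim.

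\emph{Duality.} This is item (iv) recalled above, but I would prove it as follows. First show $\CNJ(X^\ast)\le\CNJ(X)$. The key is a reformulation: $\CNJ(X)$ is the smallest $C$ such that
\[
\|x+y\|^2+\|x-y\|^2\le 2C(\|x\|^2+\|y\|^2)\quad\text{for all }x,y\in X.
\]
Equivalently, the operator $T(x,y)=(x+y,x-y)$ has norm $\sqrt{2C}$ as a map $\ell_2^2(X)\to\ell_2^2(X)$.

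Next compute the adjoint. Under the standard duality $(\ell_2^2(X))^\ast=\ell_2^2(X^\ast)$, the adjoint is $T^\ast(f,g)=(f+g,f-g)$, which is the same formula on $X^\ast$. Since $\|T^\ast\|=\|T\|$, we get $\CNJ(X^\ast)=\CNJ(X)$ in one step, with no bidual argument needed.

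Written concretely, for $f,g\in X^\ast$ and unit vectors $(x,y)$ in $\ell_2^2(X)$:
\[
|(f+g)(x)+(f-g)(y)|=|f(x+y)+g(x-y)|\le\bigl(\|f\|^2+\|g\|^2\bigr)^{1/2}\bigl(\|x+y\|^2+\|x-y\|^2\bigr)^{1/2}.
\]
Taking the supremum over such $(x,y)$ bounds $\bigl(\|f+g\|^2+\|f-g\|^2\bigr)^{1/2}$ by $\sqrt{2\CNJ(X)}\,(\|f\|^2+\|g\|^2)^{1/2}$. The reverse inequality is the same computation with the roles of $X$ and $X^\ast$ swapped. Here one pairs $x,y\in X$ with norming functionals via Hahn--Banach, so no reflexivity is needed.

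The only delicate point is this norming step, which requires the identification $\|(u,v)\|_{\ell_2^2(X)}=\sup\{|f(u)+g(v)|:\|f\|^2+\|g\|^2\le1\}$. To verify it, choose $f,g$ norming $u$ and $v$ respectively, and scale them by $\|u\|$ and $\|v\|$ divided by $\|(u,v)\|$. Over $\mathbb C$ one also adjusts their phases. This is routine, and I expect it to be the only step requiring any care.
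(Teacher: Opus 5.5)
The paper gives no proof of this lemma. It records all three claims as standard background, with the duality $\CNJ(X)=\CNJ(X^\ast)$ listed as item (iv) of the preliminaries and attributed to the cited literature, e.g.\ \cite{KatoTakahashi1997}.

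Your argument is correct and self-contained.

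\textbf{Monotonicity and localization.} These are exactly the one-line consequences of \eqref{eq:def-cnj} that you describe.

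\textbf{Duality.} You write $\CNJ(X)=\tfrac12\|T\|^2$ with $T(x,y)=(x+y,x-y)$ on $\ell_2^2(X)$. You then observe that, under the isometric identification $(\ell_2^2(X))^\ast=\ell_2^2(X^\ast)$, the adjoint $T^\ast$ is given by the same formula on $X^\ast$. Since $\|T^\ast\|=\|T\|$, this settles everything at once. What this route buys is that it needs no passage through $X^{\ast\ast}$ and no reflexivity.

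\textbf{Redundancy of the concrete computation.} Your two-direction computation simply unpacks $\|T^\ast\|\le\|T\|$ and $\|T\|\le\|T^\ast\|$. It is therefore redundant once the adjoint argument has been given, though harmless.

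\textbf{Where Hahn--Banach enters.} The direction $\CNJ(X^\ast)\le\CNJ(X)$ rests on the identity
\[
\|(F,G)\|_{\ell_2^2(X^\ast)}=\sup\{|F(x)+G(y)|:\|x\|^2+\|y\|^2\le1\}.
\]
This follows from the definition of the dual norm using approximately norming vectors, with no Hahn--Banach. It is also what makes $(\ell_2^2(X))^\ast=\ell_2^2(X^\ast)$ isometric. The Hahn--Banach identity you single out as the delicate point is needed only for the reverse direction, equivalently for $\|T\|\le\|T^\ast\|$.
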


\begin{lemma}\label{lem:endpoint-normalization}
	For every $X$, $\CNJ(X)=2$ if and only if there exist
	$x_k,y_k\in S_X$ such that
	\[
	\|x_k+y_k\|\longrightarrow2,
	\qquad
	\|x_k-y_k\|\longrightarrow2 .
	\]
\end{lemma}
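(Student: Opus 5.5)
Both directions follow by comparing the quotient in \eqref{eq:def-cnj} with the triangle inequality. Throughout, write
\[
Q(x,y)=\frac{\|x+y\|^{2}+\|x-y\|^{2}}{2(\|x\|^{2}+\|y\|^{2})}.
\]

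\emph{Sufficiency.} Suppose $x_k,y_k\in S_X$ satisfy $\|x_k\pm y_k\|\to2$. Then $Q(x_k,y_k)=\tfrac14\bigl(\|x_k+y_k\|^2+\|x_k-y_k\|^2\bigr)\to2$. Hence $\CNJ(X)\ge2$, and property (i) gives equality.

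\emph{Necessity.} Assume $\CNJ(X)=2$. Choose $(u_k,v_k)\neq(0,0)$ with $Q(u_k,v_k)\to2$. By homogeneity we may normalize so that $\|u_k\|^2+\|v_k\|^2=2$. By the triangle inequality, $\|u_k\pm v_k\|\le \|u_k\|+\|v_k\|$. Hence
\[
\|u_k+v_k\|^2+\|u_k-v_k\|^2\le 2(\|u_k\|+\|v_k\|)^2\le 4(\|u_k\|^2+\|v_k\|^2)=8.
\]
Since the left side tends to $8$, both inequalities become asymptotically tight.

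\emph{Consequences of tightness.}
\begin{itemize}[leftmargin=2em]
\item Tightness of the second inequality gives $(\|u_k\|-\|v_k\|)^2\to0$. With the normalization, this forces $\|u_k\|\to1$ and $\|v_k\|\to1$.
\item Each term $\|u_k\pm v_k\|^2$ is at most $(\|u_k\|+\|v_k\|)^2\le4$, and the two terms sum to something tending to $8$. So each term tends to $4$, that is, $\|u_k\pm v_k\|\to2$.
\end{itemize}

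\emph{Normalization.} For large $k$ both $u_k$ and $v_k$ are nonzero, so we may set $x_k=u_k/\|u_k\|$ and $y_k=v_k/\|v_k\|$. Then
\[
\|x_k\pm y_k\|\ge \|u_k\pm v_k\|-\|x_k-u_k\|-\|y_k-v_k\|,
\]
and $\|x_k-u_k\|=|1-\|u_k\||\to0$, likewise $\|y_k-v_k\|\to0$. Combined with the upper bound $\|x_k\pm y_k\|\le 2$, this gives $\|x_k\pm y_k\|\to2$, as required.

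The only delicate step is extracting both $\|u_k\|\to1$ and $\|v_k\|\to1$ from the near-equality. This is handled by the strict convexity of $t\mapsto t^2$, which appears as the identity $2(a^2+b^2)-(a+b)^2=(a-b)^2$.
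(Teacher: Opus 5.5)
Your proof is correct and follows the same route as the paper's: bound the quotient using the triangle inequality, and observe that pairs whose quotient is near $2$ must have $\|x_k\pm y_k\|\to2$. Your version is more complete. The paper tacitly treats the near-maximizing pairs as unit vectors, whereas your normalization $\|u_k\|^2+\|v_k\|^2=2$, together with the identity $2(a^2+b^2)-(a+b)^2=(a-b)^2$ and the final rescaling to $S_X$, actually justifies that reduction.
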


\begin{proof}
	\REV{By \eqref{eq:def-cnj}, the quotient for unit vectors is at most $2$.
		If $\CNJ(X)=2$, choose a sequence of pairs whose quotients tend to $2$;
		then both $\|x_k+y_k\|$ and $\|x_k-y_k\|$ tend to $2$. The converse is
		immediate from the same formula.}
\end{proof}

We now define the injective and projective tensor norms on the algebraic
tensor product $X\otimes Y$. For
$w=\sum_{j=1}^r x_j\otimes y_j\in X\otimes Y$, define
\begin{align}
	\|w\|_\varepsilon
	&=\sup\left\{
	\left|\sum_{j=1}^r x^\ast(x_j)y^\ast(y_j)\right|:
	x^\ast\in B_{X^\ast},y^\ast\in B_{Y^\ast}\right\},\\
	\|w\|_\pi
	&=\inf\left\{
	\sum_{j=1}^r\|x_j\|\|y_j\|:
	w=\sum_{j=1}^r x_j\otimes y_j\right\}.
\end{align}
\REV{For $\alpha\in\{\varepsilon,\pi\}$, both tensor norms are crossnorms; in
	particular,}
\[
\|x\otimes y\|_\alpha=\|x\|\|y\|.
\]
We shall also use the canonical isometric identification
\[
(X\widehat\otimes_\pi Y)^*
\cong\mathcal L(X,Y^\ast).
\]

For more details on tensor products, see~\cite{Ryan2002}.

\section{Finite-dimensional tensor structures and von Neumann--Jordan constants}

\REV{We first examine finite-dimensional configurations that force extremal
behavior of the tensor constants.}

The following result shows that, although both factors possess exact Hilbertian geometry, the two canonical Banach-space tensor norms destroy this structure to the maximal extent measured by the von Neumann--Jordan constant.

\begin{proposition}\label{thm:hilbert-cert}
If $H,K$ are Hilbert spaces of dimension at least two, then
\[
 \CNJ(H\oteps K)=\CNJ(H\otpi K)=2.
\]
\end{proposition}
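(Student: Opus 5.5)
Since $\dim H,\dim K\ge 2$, we can pick orthonormal pairs $e_1,e_2\in H$ and $f_1,f_2\in K$. The subspaces $\mathrm{span}\{e_1,e_2\}\otimes\mathrm{span}\{f_1,f_2\}$ sit isometrically inside $H\oteps K$ and $H\otpi K$: for $\eps$ by injectivity, and for $\pi$ because orthogonal projections onto finite-dimensional subspaces are norm-one and complemented. By the monotonicity in \cref{lem:basic-cnj} it therefore suffices to treat $H=K=\ell_2^2$. By \cref{lem:endpoint-normalization} (with constant sequences), we then only need two unit vectors $u,v$ with $\|u+v\|=\|u-v\|=2$.

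For the injective norm, identify $\ell_2^2\oteps\ell_2^2$ with $2\times 2$ matrices under the operator norm. Take
\[
u=e_1\otimes f_1,\qquad v=e_2\otimes f_2 .
\]
Both have norm $1$, since the norm is a crossnorm. Also $u\pm v=\mathrm{diag}(1,\pm1)$ has operator norm $1$, not $2$, so this pair is the wrong choice here. Instead take
\[
u=\begin{pmatrix}1&0\\0&1\end{pmatrix},\qquad v=\begin{pmatrix}1&0\\0&-1\end{pmatrix}.
\]
Each has operator norm $1$. Then $u+v=2e_1\otimes f_1$ and $u-v=2e_2\otimes f_2$, both of norm $2$.

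For the projective norm, identify it with the trace-class (nuclear) norm on $2\times2$ matrices. This is dual to the operator norm via $(H\otpi K)^*\cong\Lop(H,K^*)$. Take the dual pair
\[
u=e_1\otimes f_1,\qquad v=e_2\otimes f_2 .
\]
Both have nuclear norm $1$. The matrices $u\pm v=\mathrm{diag}(1,\pm1)$ have singular values $1,1$, so their nuclear norm is $2$. Alternatively, one can use $\CNJ(X)=\CNJ(X^*)$ from \cref{lem:basic-cnj} together with the duality $(\ell_2^2\oteps\ell_2^2)^*=\ell_2^2\otpi\ell_2^2$, which holds in finite dimensions.

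The only step that needs real care is the reduction from $H\otpi K$ to the subspace. Projective norms are not injective in general, so I will justify this step through the norm-one projections $P\otimes Q$. These give $\|w\|_{\pi,E\otimes F}\le\|w\|_{\pi,H\otimes K}$ for $w\in E\otimes F$, while the reverse inequality is automatic. Once that is in place, the explicit singular-value computations close the argument for both tensor norms.
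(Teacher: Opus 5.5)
Your proof is correct and uses the same witnesses as the paper: $e_1\otimes f_1+e_2\otimes f_2$ and $e_1\otimes f_1-e_2\otimes f_2$ for $\eps$, and $e_1\otimes f_1$, $e_2\otimes f_2$ for $\pi$. The only difference is that you first pass to $\mathrm{span}\{e_1,e_2\}\otimes\mathrm{span}\{f_1,f_2\}$ via the norm-one projections $P\otimes Q$, whereas the paper stays in $H\otimes K$ and gets $\|e_1\otimes f_1\pm e_2\otimes f_2\|_\pi\ge 2$ directly by pairing with a norm-one partial isometry in $\Lop(H,K^*)$; your reduction is valid but unnecessary, and you should delete the abandoned first attempt in your injective paragraph.
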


\begin{proof}
Choose orthonormal $e_1,e_2\in H$ and $f_1,f_2\in K$. For the injective norm set
\[
 U=e_1\otimes f_1+e_2\otimes f_2,
 \qquad V=e_1\otimes f_1-e_2\otimes f_2.
\]
Under the standard operator-space identification of the injective tensor
product, $\|U\|_\eps=\|V\|_\eps=1$ and
$\|U\pm V\|_\eps=2$. For the projective norm set
$A=e_1\otimes f_1$ and $B=e_2\otimes f_2$. The crossnorm property gives $\|A\pm B\|_\pi\le2$. Equality follows by
evaluating against norm-one operators that map $e_1$ to $f_1$ and
$e_2$ to $\pm f_2$.
Thus both defining quotients are equal to $2$.
\end{proof}

\begin{corollary}\label{cor:non-hilbert-tensor-obstruction}
Let $X$ and $Y$ be non-zero Banach spaces satisfying
$\dim X\ge2$ and $\dim Y\ge2$.  Then, for
$\alpha\in\{\eps,\pi\}$,
\[
 \CNJ(X\widehat\otimes_\alpha Y)>1.
\]
Consequently, neither $X\widehat\otimes_\eps Y$ nor
$X\widehat\otimes_\pi Y$ is isometric to a Hilbert space.
\end{corollary}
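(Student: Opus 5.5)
Since $\CNJ(Z)=1$ characterizes Hilbert spaces (property (ii)), it suffices to show that $X\widehat\otimes_\alpha Y$ is not isometric to a Hilbert space. The second assertion then follows immediately from the first. The plan is to find a two-dimensional isometric copy of a non-Euclidean space inside the tensor product and then invoke \cref{lem:basic-cnj}.

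\emph{Reduction to two-dimensional subspaces.} Choose linearly independent $x_1,x_2\in X$ and $y_1,y_2\in Y$. For $\alpha\in\{\eps,\pi\}$ the tensor product $E\otimes_\alpha F$ of finite-dimensional subspaces does not embed isometrically into $X\widehat\otimes_\alpha Y$ in general; this holds for $\eps$, but not for $\pi$. I will therefore avoid full subspace tensor products and work only with elementary tensors, where the crossnorm property is exact for both norms. The key object is the two-dimensional subspace
\[
G=\operatorname{span}\{x_1\otimes y,\ x_2\otimes y\}
\]
with $y\in S_Y$ fixed. For any $a,b\in\mathbb K$, $(ax_1+bx_2)\otimes y$ is an elementary tensor, so its $\alpha$-norm equals $\|ax_1+bx_2\|$. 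Hence $G$ is isometric to $E=\operatorname{span}\{x_1,x_2\}\subset X$, and \cref{lem:basic-cnj} gives $\CNJ(X\widehat\otimes_\alpha Y)\ge \CNJ(E)$. This only yields $>1$ if $X$ itself is not Hilbert, so a different argument is needed when $X$ and $Y$ are both Hilbertian.

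\emph{Case split.} If $X$ is not a Hilbert space, then by \cref{lem:basic-cnj} some two-dimensional $E\subset X$ has $\CNJ(E)>1$, and the step above finishes the proof. The case where $Y$ is not Hilbert is symmetric, using $x\otimes(\cdot)$. If both $X$ and $Y$ are Hilbert spaces of dimension at least two, \cref{thm:hilbert-cert} gives $\CNJ=2>1$ directly.

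\emph{Main obstacle.} The delicate point is the Hilbert case of the dichotomy, which needs the exact identification used in \cref{thm:hilbert-cert}. That identification requires the isometric embedding of $\operatorname{span}\{e_i\otimes f_j\}$ for the injective norm, which follows from injectivity of $\eps$. For the projective norm it requires the duality with $\Lop(H,K^*)$, which is used there via norm-one operators defined on all of $H$ (extend by orthogonal projection). Since that proposition is available, this case is settled. The rest of the argument uses only the exact crossnorm identity on elementary tensors, so it is uniform in $\alpha$.
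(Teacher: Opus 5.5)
Your proof is correct, but it is organized differently from the paper's. The paper argues in one step. Assuming the tensor product were Hilbert, it asserts that ``two-dimensional tensor configurations'' built from two linearly independent vectors in each factor violate the parallelogram identity for both tensor norms. It then concludes via the Jordan--von Neumann characterization, without specifying the configuration for general factors or explaining why the identity fails when the factors are not Hilbert.

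You instead split into cases:
\begin{itemize}
\item If one factor is non-Hilbert, the crossnorm property makes $x\mapsto x\otimes y_0$ (with $y_0\in S_Y$) an isometric embedding. Monotonicity of $\CNJ$ under isometric subspaces then gives $\CNJ(X\widehat\otimes_\alpha Y)\ge\CNJ(X)>1$.
\item If both factors are Hilbert of dimension at least two, \cref{thm:hilbert-cert} gives the value $2$.
\end{itemize}

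What your route buys is a complete argument that uses only the exact crossnorm identity and the explicit Hilbert computation. It also sidesteps the issue you correctly flag: $\operatorname{span}\{x_i\otimes y_j\}$ inside $X\otpi Y$ need not carry the norm of $E\otpi F$. This is the same reasoning the paper itself uses later in \cref{thm:metric-hilbert-rigidity}. The paper's one-line route is shorter, but to be made rigorous it would need essentially this reduction: if the tensor product is Hilbert, the isometric factor copies are Hilbert, and then the configurations of \cref{thm:hilbert-cert} apply.

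One simplification: your detour through a two-dimensional non-Euclidean $E\subset X$ is unnecessary. The whole of $X$ embeds isometrically via $x\mapsto x\otimes y_0$, which gives $\CNJ(X\widehat\otimes_\alpha Y)\ge\CNJ(X)$ directly.
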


\begin{proof}
If the tensor product were Hilbert, its norm would satisfy the parallelogram
identity and its von Neumann--Jordan constant would be $1$. Choose two linearly independent vectors in each factor. The resulting
two-dimensional tensor configurations violate the parallelogram identity for
each of the two canonical tensor norms. Therefore, the tensor norm is not induced by an inner product. The
Jordan--von Neumann characterization then yields
\[
 \CNJ(X\widehat\otimes_\alpha Y)\ne1 .
\]
Since every normed space satisfies $\CNJ\ge1$, the strict inequality follows.
\end{proof}

\REV{For $d\ge2$, we quantify the Euclidean structure of $d$-dimensional
	subspaces of $X$ by
\[
 \eta_d(X)=\inf\{d(E,\ell_2^d): E\subset X,\ \dim E=d\},
\]
where $\eta_d(X)=\infty$ if $X$ contains no $d$-dimensional subspace. We also use the complemented Euclidean-section index
\[
 \kappa_r(X)=\inf\{\|P\|\,d(E,\ell_2^r): E\subset X,\ \dim E=r,\ P:X\to E\text{ is a projection}\},
\]
with $\kappa_r(X)=\infty$ if the set over which the infimum is taken is empty.}

\begin{theorem}\label{thm:two-plane}
If both factors have dimension at least two, then
\begin{align}
 \CNJ(X\oteps Y)&\ge
 \max\left\{\CNJ(X),\CNJ(Y),
 \frac{2}{\eta_2(X)^2\eta_2(Y)^2}\right\},\label{eq:epsbound}\\
 \CNJ(X\otpi Y)&\ge
 \max\left\{\CNJ(X),\CNJ(Y),
 \frac{2}{\eta_2(X^*)^2\eta_2(Y^*)^2}\right\}.\label{eq:pibound}
\end{align}
\end{theorem}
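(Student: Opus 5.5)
The plan is to prove the three lower bounds in each line separately and then take the maximum. Both ingredients are already available: monotonicity of $\CNJ$ under isometric subspaces and under duality (\cref{lem:basic-cnj}), and the explicit extremal configuration from \cref{thm:hilbert-cert}.

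\emph{The terms $\CNJ(X)$ and $\CNJ(Y)$.} Fix $y_0\in S_Y$ and consider $J:X\to X\widehat\otimes_\alpha Y$, $Jx=x\otimes y_0$. By the crossnorm property, $\|Jx\|_\alpha=\|x\|\,\|y_0\|=\|x\|$. Since $J$ is linear, it is an isometric embedding. (For $\pi$ one can also see this directly: the upper bound is the crossnorm, and the lower bound comes from evaluating against $x^*\otimes y_0^*$ with $y_0^*$ norming $y_0$.) \Cref{lem:basic-cnj} then gives $\CNJ(X)\le\CNJ(X\widehat\otimes_\alpha Y)$. The same argument with the roles of $X$ and $Y$ exchanged gives $\CNJ(Y)\le\CNJ(X\widehat\otimes_\alpha Y)$.

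\emph{The Euclidean term for $\eps$.} Let $\delta>0$ and choose $2$-dimensional subspaces $E\subset X$ and $F\subset Y$ with $d(E,\ell_2^2)\le d_E:=\eta_2(X)+\delta$ and $d(F,\ell_2^2)\le d_F:=\eta_2(Y)+\delta$. By rescaling, pick isomorphisms $T:\ell_2^2\to E$ and $S:\ell_2^2\to F$ with $\|T\|,\|S\|\le1$, $\|T^{-1}\|\le d_E$ and $\|S^{-1}\|\le d_F$.

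The injective norm is injective, so $E\oteps F$ sits isometrically in $X\oteps Y$. By the metric mapping property, $\|T\otimes S\|\le1$ and $\|T^{-1}\otimes S^{-1}\|\le d_Ed_F$ on the $\eps$-products.

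Take $U=e_1\otimes f_1+e_2\otimes f_2$ and $V=e_1\otimes f_1-e_2\otimes f_2$ from \cref{thm:hilbert-cert}. In $\ell_2^2\oteps\ell_2^2$, identified with $2\times2$ matrices under the operator norm, these are $I$ and $\mathrm{diag}(1,-1)$, so $\|U\|=\|V\|=1$ and $\|U\pm V\|=2$. Put $u=(T\otimes S)U$ and $v=(T\otimes S)V$. Then $\|u\|,\|v\|\le1$ and $\|u\pm v\|\ge 2/(d_Ed_F)$. Hence the defining quotient is at least
\[
\frac{2\cdot 4/(d_Ed_F)^2}{2\cdot 2}=\frac{2}{d_E^2d_F^2}.
\]
Letting $\delta\to0$ yields \eqref{eq:epsbound}. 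The infimum defining $\eta_2$ need not be attained, which is why we approximate. If $\eta_2=\infty$ the term is $0$ and there is nothing to prove.

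\emph{The Euclidean term for $\pi$, via duality.} By \cref{lem:basic-cnj} and the identification $(X\otpi Y)^*\cong\Lop(X,Y^*)$,
\[
\CNJ(X\otpi Y)=\CNJ(\Lop(X,Y^*)).
\]
The algebraic tensor product $X^*\otimes Y^*$ embeds into $\Lop(X,Y^*)$ via $x^*\otimes y^*\mapsto x^*(\cdot)y^*$. The operator norm of $\sum x_j^*(\cdot)y_j^*$ is
\[
\sup_{x\in B_X}\ \sup_{y^{**}\in B_{Y^{**}}}\Bigl|\sum x_j^*(x)\,y^{**}(y_j^*)\Bigr|.
\]
By Goldstine's theorem, taking the supremum over $B_X$ gives the same value as taking it over $B_{X^{**}}$. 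So this operator norm is exactly the injective norm on $X^*\otimes Y^*$, and $X^*\oteps Y^*$ is an isometric subspace of $\Lop(X,Y^*)$.

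Apply the injective step to the pair $(X^*,Y^*)$. It only uses $2$-dimensional subspaces and the inputs $\eta_2(X^*)$ and $\eta_2(Y^*)$, and both $X^*$ and $Y^*$ have dimension at least two. This gives $\CNJ(X\otpi Y)\ge 2/(\eta_2(X^*)^2\eta_2(Y^*)^2)$, which is \eqref{eq:pibound}.

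The step needing the most care is this last identification: checking that the dual-side embedding is genuinely isometric (the Goldstine argument above) and that the duality equality from \cref{lem:basic-cnj} applies to the completed projective product. The matrix computation itself is routine.
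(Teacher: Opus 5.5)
Your proposal is correct and follows essentially the same route as the paper: you take near-Euclidean two-dimensional sections and use injectivity of $\eps$ with the metric mapping property to transport the $U,V$ configuration of \cref{thm:hilbert-cert} at a cost of $d_E^2d_F^2$. For the projective bound you pass to the isometric copy of $X^*\oteps Y^*$ inside $\Lop(X,Y^*)\cong(X\otpi Y)^*$ and use $\CNJ(Z)=\CNJ(Z^*)$, exactly as the paper does. Your version also fills in details the paper leaves implicit, namely the factor terms via $x\mapsto x\otimes y_0$ and the check that the dual-side embedding is isometric.
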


\begin{proof}
Choose two-dimensional subspaces $E\subset X$ and $F\subset Y$ that are
arbitrarily close to attaining the infima defining $\eta_2(X)$ and
$\eta_2(Y)$. The injective tensor norm, the corresponding
Banach--Mazur distortion estimate, and \cref{thm:hilbert-cert} yield
\eqref{eq:epsbound}. For the projective estimate, apply the injective
estimate to $X^*$ and $Y^*$, using the dual identification of projective and
injective tensor products and the identity $\CNJ(Z)=\CNJ(Z^*)$.
\end{proof}

\begin{theorem}\label{thm:rigidity}
If $X$ and $Y$ are infinite-dimensional, then
\[
\CNJ(X\oteps Y)=\CNJ(X\otpi Y)=2.
\]
Consequently, if the two tensor constants are different, at least one factor is finite-dimensional.
\end{theorem}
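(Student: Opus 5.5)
The plan is to reduce everything to \cref{thm:two-plane} by showing that infinite-dimensional spaces have almost Euclidean two-dimensional subspaces, that is, $\eta_2(X)=\eta_2(Y)=1$. The tool is Dvoretzky's theorem. For every $\delta>0$ and every $d$, an infinite-dimensional Banach space contains a $d$-dimensional subspace $E$ with $d(E,\ell_2^d)\le1+\delta$. Taking $d=2$ gives $\eta_2(X)=\eta_2(Y)=1$. Since $X^*$ and $Y^*$ are also infinite-dimensional, the same argument gives $\eta_2(X^*)=\eta_2(Y^*)=1$. Substituting these values into \eqref{eq:epsbound} and \eqref{eq:pibound} yields $\CNJ(X\oteps Y)\ge2$ and $\CNJ(X\otpi Y)\ge2$. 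Combined with the universal upper bound $\CNJ\le2$, both constants equal $2$.

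Because the proof of \cref{thm:two-plane} is only sketched, I would make the quantitative step explicit rather than rely on it as a black box. Fix $\delta>0$. Choose $E\subset X$ and $F\subset Y$ of dimension two with isomorphisms $T:\ell_2^2\to E$ and $S:\ell_2^2\to F$ satisfying $\|T\|\|T^{-1}\|,\|S\|\|S^{-1}\|\le1+\delta$. The injective norm is injective, so $E\oteps F$ sits isometrically inside $X\oteps Y$, and \cref{lem:basic-cnj} gives $\CNJ(E\oteps F)\le\CNJ(X\oteps Y)$. The operator $T\otimes S$ maps $\ell_2^2\oteps\ell_2^2$ onto $E\oteps F$ with $\|T\otimes S\|\,\|(T\otimes S)^{-1}\|\le(1+\delta)^2$, because $\eps$ is a uniform crossnorm. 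From the defining quotient \eqref{eq:def-cnj}, if $d(Z,W)\le\lambda$ then $\CNJ(W)\ge\CNJ(Z)/\lambda^2$. Applying this with \cref{thm:hilbert-cert} gives
\[
\CNJ(X\oteps Y)\ge \frac{2}{(1+\delta)^4},
\]
and letting $\delta\to0$ yields $2$.

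For the projective norm I would avoid the non-injectivity of $\pi$ by passing to the dual. By \cref{lem:basic-cnj}, $\CNJ(X\otpi Y)=\CNJ((X\otpi Y)^*)$, and $(X\otpi Y)^*\cong\Lop(X,Y^*)$. The canonical embedding of $X^*\oteps Y^*$ into $\Lop(X,Y^*)$ as finite-rank operators is isometric, which is the standard fact that the injective norm is the operator norm. Hence
\[
\CNJ(X\otpi Y)\ge\CNJ(X^*\oteps Y^*)=2,
\]
where the last equality is the injective case just proved, applied to the infinite-dimensional spaces $X^*$ and $Y^*$.

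The final assertion is then immediate: if both factors were infinite-dimensional, the two constants would both equal $2$, so if they differ, one factor must be finite-dimensional. The main point needing care is the distortion step, namely that the tensor product of two $(1+\delta)$-isomorphisms is a $(1+\delta)^2$-isomorphism for $\eps$, together with the $\lambda^2$ loss in the von Neumann--Jordan quotient. Both follow from the uniform crossnorm property and a direct estimate of \eqref{eq:def-cnj}. Dvoretzky's theorem itself will be quoted rather than proved.
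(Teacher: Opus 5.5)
Your proposal is correct and follows the paper's route: Dvoretzky's theorem gives $\eta_2(X)=\eta_2(Y)=\eta_2(X^*)=\eta_2(Y^*)=1$, and then \cref{thm:two-plane} together with the universal bound $\CNJ\le2$ finishes the argument. Your explicit $(1+\delta)^4$ distortion estimate and your use of the isometric embedding $X^*\oteps Y^*\hookrightarrow\Lop(X,Y^*)=(X\otpi Y)^*$ only make rigorous steps that the paper's sketch of \cref{thm:two-plane} leaves implicit. In infinite dimensions that dual relation is an isometric embedding rather than an identification, and the embedding is all the argument needs.
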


\begin{proof}
Dvoretzky's theorem gives $\eta_2(X)=\eta_2(Y)=\eta_2(X^*)=\eta_2(Y^*)=1$.  Apply \cref{thm:two-plane} and the universal upper bound.  The final assertion is the contrapositive.
\end{proof}

The following remark provides a new argument.

\begin{remark}

Suppose first that $C_{\mathrm{NJ}}(X)=2$ or $C_{\mathrm{NJ}}(Y)=2$. \cref{thm:two-plane} and the universal upper bound $C_{\mathrm{NJ}}\leq 2$ immediately give the conclusion for both tensor norms.

Assume that $X$ and $Y$ are infinite-dimensional and satisfy
$\CNJ(X)<2$ and $\CNJ(Y)<2$. Then both spaces are uniformly non-square and,
consequently, superreflexive.Suppose, for contradiction, that
\[
\CNJ(X\widehat\otimes_\pi Y)<2.
\]
Then $X\widehat\otimes_\pi Y$ would be uniformly non-square and hence
superreflexive. By Rueda Zoca's theorem \cite{RuedaZoca2025}, for
	superreflexive factors the
	projective tensor product is superreflexive if and only if at least one
	factor is finite-dimensional, which contradicts the infinite-dimensional
	assumption.  The injective case follows analogously.  Thus, for infinite-dimensional factors, both canonical tensor norms yield
	the maximal value $\CNJ=2$.
\end{remark}

\begin{corollary}
	If
	\[
	\CNJ(X\widehat\otimes_\eps Y)\neq
	\CNJ(X\widehat\otimes_\pi Y),
	\]
	then at least one of $X,Y$ is finite-dimensional.
\end{corollary}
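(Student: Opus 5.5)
The corollary is the contrapositive of \cref{thm:rigidity}, so the plan is to argue directly by contraposition. Suppose both $X$ and $Y$ are infinite-dimensional. I want to show that then $\CNJ(X\oteps Y)=\CNJ(X\otpi Y)$; this is exactly what has to be ruled out when the two constants differ.

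Under this assumption, \cref{thm:rigidity} gives $\CNJ(X\oteps Y)=2$ and $\CNJ(X\otpi Y)=2$, so the two constants coincide. Hence, if they are different, the assumption fails, and at least one of $X,Y$ is finite-dimensional.

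For completeness, I would recall why \cref{thm:rigidity} holds. Dvoretzky's theorem applies to the infinite-dimensional spaces $X,Y,X^*,Y^*$, so each contains two-dimensional subspaces arbitrarily close to $\ell_2^2$. This gives $\eta_2=1$ for all four spaces. Substituting into \eqref{eq:epsbound} and \eqref{eq:pibound} shows that both tensor constants are at least $2$. The universal bound $\CNJ\le2$ then forces equality. Alternatively, the Remark after \cref{thm:rigidity} reaches the same conclusion via superreflexivity.

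No step here is a genuine obstacle; all the substance is already in \cref{thm:rigidity}. The only point needing care is that \cref{thm:two-plane} has to be valid with the $\eta_2$ terms actually approaching $1$, i.e., that the $\eta_2=1$ infimum is approached uniformly enough. Since \cref{thm:two-plane} is stated as an inequality involving the infimum, this is immediate.
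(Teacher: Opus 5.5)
Your proposal is correct and follows the paper's own argument. The paper proves the corollary as the contrapositive of \cref{thm:rigidity}, which it establishes exactly as you recall: Dvoretzky's theorem gives $\eta_2=1$ for $X$, $Y$, $X^*$, $Y^*$; these values are inserted into the lower bounds of \cref{thm:two-plane}; and the universal bound $\CNJ\le 2$ forces equality.
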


\begin{theorem}\label{thm:product-upper-bound-infinite}
Let $\mathbb K\in\{\R,\mathbb C\}$, let $X$ and $Y$ be infinite-dimensional Banach spaces over $\mathbb K$, and let $\alpha\in\{\eps,\pi\}$.  Then the following are equivalent:
\begin{enumerate}[label=\textup{(\roman*)}]
\item
\[
 \CNJ(X\widehat\otimes_\alpha Y)
 \le \CNJ(X)\CNJ(Y);
\]
\item
\[
 \CNJ(X)\CNJ(Y)\ge2.
\]
\end{enumerate}
Equivalently,
\[
 \max\{\CNJ(X),\CNJ(Y)\}
 \le \CNJ(X\widehat\otimes_\alpha Y)=2
 \le \CNJ(X)\CNJ(Y)
\]
holds exactly when $\CNJ(X)\CNJ(Y)\ge2$. This is an immediate consequence
of the extremal identity in \cref{thm:rigidity}.
\end{theorem}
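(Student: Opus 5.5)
The plan is to reduce everything to the extremal identity of \cref{thm:rigidity}. Since $X$ and $Y$ are infinite-dimensional, that theorem gives $\CNJ(X\widehat\otimes_\alpha Y)=2$ for both $\alpha=\eps$ and $\alpha=\pi$. Substituting this value into (i) turns it into the scalar inequality $2\le \CNJ(X)\CNJ(Y)$, which is (ii) verbatim. So the equivalence (i)$\Leftrightarrow$(ii) follows at once, in both directions.

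For the displayed chain I argue in three steps.

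\textbf{Left inequality.} The inequality $\max\{\CNJ(X),\CNJ(Y)\}\le \CNJ(X\widehat\otimes_\alpha Y)$ holds unconditionally. It is the first part of \cref{thm:two-plane}, which applies because infinite-dimensional spaces have dimension at least two. Alternatively, it follows from the universal bound $\CNJ\le2$ together with the value $2$ on the right.

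\textbf{Middle equality.} This is again \cref{thm:rigidity}.

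\textbf{Right inequality.} This is exactly (ii).

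Hence the whole chain holds if and only if $\CNJ(X)\CNJ(Y)\ge2$: the forward direction reads off the last inequality, and the converse assembles the three pieces.

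There is no real obstacle here. The only point to check is that \cref{thm:rigidity} covers both the real and the complex scalar field and both tensor norms. Its proof rests on Dvoretzky's theorem applied to $X,Y,X^*,Y^*$ (all infinite-dimensional) and on \cref{thm:two-plane}, and both are valid over $\mathbb K\in\{\R,\mathbb C\}$. The hypothesis on $\mathbb K$ therefore costs nothing.

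I would also note that (ii) can genuinely fail, for instance when $X=Y=\ell_2$, where $\CNJ(X)\CNJ(Y)=1$. This shows that the submultiplicative bound (i) is not automatic.
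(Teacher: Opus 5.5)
Your proposal is correct and follows the paper's proof. Both arguments substitute $\CNJ(X\widehat\otimes_\alpha Y)=2$ from \cref{thm:rigidity} to reduce (i) to (ii), and both obtain the left inequality from the canonical isometric factor embeddings, which underlie the factor terms of \cref{thm:two-plane}; your remark that the left inequality also follows trivially from the universal bound $\CNJ\le 2$ is a valid minor shortcut.
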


\begin{proof}
By \cref{thm:rigidity},
\[
 \CNJ(X\widehat\otimes_\alpha Y)=2.
\]
Hence the product upper bound in \textup{(i)} is equivalent to
$2\le \CNJ(X)\CNJ(Y)$, which is precisely \textup{(ii)}.  The left-hand inequality follows from the canonical isometric embeddings of
the two factors.
\end{proof}

The different restrictions on the parameters in the following theorem also reflect the intrinsic distinction between the two canonical tensor norms.

\begin{theorem}\label{thm:phase}
Let $1\le p,q\le\infty$ and $m,n\ge2$.
\begin{enumerate}[label=\textup{(\roman*)}]
\item If $p^{-1}+q^{-1}\le1$, then
\[
 \CNJ(\ell_p^m\oteps\ell_q^n)=2.
\]
\item For every $p,q$,
\[
 \CNJ(\ell_p^m\otpi\ell_q^n)=2.
\]
\end{enumerate}
Thus the projective tensor product always attains the maximal value $2$,
whereas the injective conclusion in \textup{(i)} holds in the region
$p^{-1}+q^{-1}\le1$.
\end{theorem}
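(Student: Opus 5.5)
By \cref{lem:endpoint-normalization}, and since all spaces here are finite-dimensional so that suprema are attained, it suffices in each case to exhibit $U,V$ with $\|U\|=\|V\|=1$ and $\|U\pm V\|=2$. The defining quotient in \eqref{eq:def-cnj} then equals $2$, and the universal upper bound $\CNJ\le2$ gives equality. Throughout I would work inside the span of $e_1,e_2$ in each factor. This costs nothing, because $\ell_p^2\otimes\ell_q^2$ sits isometrically in $\ell_p^m\oteps\ell_q^n$ (the injective norm respects subspaces), and it sits $1$-complementedly, hence isometrically, in $\ell_p^m\otpi\ell_q^n$ (coordinate projections have norm $1$).

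For (i), take $U=e_1\otimes e_1+e_2\otimes e_2$ and $V=e_1\otimes e_1-e_2\otimes e_2$, as in \cref{thm:hilbert-cert}. Then $U+V=2e_1\otimes e_1$ and $U-V=2e_2\otimes e_2$, which have norm $2$ by the crossnorm property. It remains to show $\|U\|_\eps=\|V\|_\eps=1$. By definition,
\[
\|U\|_\eps=\sup\{|a_1b_1\pm a_2b_2| : \|a\|_{p'}\le1,\ \|b\|_{q'}\le1\}.
\]
The hypothesis $p^{-1}+q^{-1}\le1$ is equivalent to $1/p'+1/q'\ge1$. In that case $\|b\|_{r}\le\|b\|_{q'}$ with $1/r=1-1/p'$ (so $r=p\ge q'$), and Hölder gives $|a_1b_1|+|a_2b_2|\le\|a\|_{p'}\|b\|_p\le1$. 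The lower bound $\|U\|_\eps\ge1$ comes from $a=b=e_1$. This completes (i) with no real difficulty.

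For (ii), I would first dispose of the region $p^{-1}+q^{-1}\ge1$ by duality. In finite dimensions, $(\ell_p^m\otpi\ell_q^n)^*\cong\ell_{p'}^m\oteps\ell_{q'}^n$ isometrically, and $\CNJ(Z)=\CNJ(Z^*)$ by \cref{lem:basic-cnj}. Since $1/p'+1/q'\le1$ exactly when $1/p+1/q\ge1$, part (i) applied to $(p',q')$ gives the value $2$.

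In the complementary region $1/p+1/q<1$, so $p,q>1$, the pair $A=e_1\otimes e_1$, $B=e_2\otimes e_2$ has $\|A\pm B\|_\pi\le2$. Rank-one dual functionals only certify $\|A\pm B\|_\pi\ge 2^{1/p+1/q-1}$, however, so a better witness or a better functional is needed. \textbf{This is the main obstacle.} My first attempt would be to test $A\pm B$ against the diagonal operators $T_\pm=\mathrm{diag}(1,\pm1)\colon\ell_p^2\to\ell_{q'}^2$. Note that $q'<p$ here, so $\|T_\pm\|=2^{1/q'-1/p}>1$, which is again insufficient. I would therefore change the witnesses to a "Hadamard" pair such as
\[
A=\tfrac{1}{c}(e_1\otimes e_1+e_2\otimes e_2),\qquad B=\tfrac1c(e_1\otimes e_2+e_2\otimes e_1),
\]
normalised by their projective norms, and compute $\|A\pm B\|_\pi$ via the factorisations $A\pm B=\tfrac1c(e_1\pm e_2)\otimes(e_1\pm e_2)$. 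I would then pair this against norm-one operators of the form $x\mapsto\langle x,e_1\pm e_2\rangle(\cdot)$ to obtain matching lower bounds. If this still falls short of $2$, I would fall back on the real isometry $\ell_\infty^2\cong\ell_1^2$ for the endpoint cases and try to interpolate the remaining exponents. I expect the verification in this region to be the delicate step of the whole proof.
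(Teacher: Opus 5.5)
Your part (i) is correct and is essentially the paper's argument (you apply the norm comparison to $b$ rather than to $a$). Your treatment of (ii) for $p^{-1}+q^{-1}\ge1$, by duality with (i), is a valid repackaging of the paper's direct argument with the bilinear forms $x_1y_1\pm x_2y_2$.

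The gap is the region $s:=p^{-1}+q^{-1}<1$. You leave it open, and the witness you propose there is oriented the wrong way. Put $d=e_1\otimes e_1+e_2\otimes e_2$, $d'=e_1\otimes e_2+e_2\otimes e_1$ and $c=\|d\|_\pi$. Your pair $A=d/c$, $B=d'/c$ then satisfies $\|A\pm B\|_\pi=2^{s}/c$ exactly, by the crossnorm property. So the ``matching lower bounds'' from rank-one operators are beside the point. Since $c\ge\|d\|_\eps\ge1$ and $s<1$, this value is at most $2^s<2$ and can never equal $2$. In fact $c=2^s$, so $\|A\pm B\|_\pi=1$ and the quotient in \eqref{eq:def-cnj} equals $\tfrac12$, the smallest value it can take in any normed space. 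The interpolation fallback is not an argument either: interpolation tends to improve convexity and gives no mechanism for forcing the extremal value $2$.

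The missing step is to swap the roles of the vectors. Take the rank-one Hadamard tensors $x=(e_1+e_2)\otimes(e_1+e_2)$ and $y=(e_1-e_2)\otimes(e_1-e_2)$ as the pair, so that $\|x\|_\pi=\|y\|_\pi=2^s$, and note that $x+y=2d$ and $x-y=2d'$. Equivalently, replacing $(A,B)$ by $(A+B,A-B)$ inverts the quotient.

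What must then be proved is the exact value $\|d\|_\pi=\|d'\|_\pi=2^s$.
\begin{itemize}
\item The upper bound comes from $d=\tfrac12(x+y)$.
\item The lower bound is exactly the computation you discarded as insufficient. Testing $d$ against $T_+/\|T_+\|$, with $\|T_+\|=2^{1/q'-1/p}=2^{1-s}$, gives $\|d\|_\pi\ge 2\cdot 2^{s-1}=2^s$. This is the paper's functional $\beta(u,v)=2^{s-1}(u_1v_1+u_2v_2)$, bounded by three-factor H\"older.
\item The value for $d'$ follows by a coordinate transposition in the second factor.
\end{itemize}
Then $\|x\pm y\|_\pi=2^{s+1}$ and the quotient equals $2$. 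Without this step, your proposal does not prove (ii) in the subcritical region.
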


\begin{proof}
If $p$ or $q$ is an endpoint exponent, $1$ or $\infty$, then the
corresponding factor has von Neumann--Jordan constant $2$, and the canonical
factor embedding gives the required conclusion. We may therefore assume
$1<p,q<\infty$.

For (i), put
\[
 U=e_1\otimes f_1+e_2\otimes f_2,
 \qquad V=e_1\otimes f_1-e_2\otimes f_2.
\]
The hypothesis is $p'\le q$.  If $a\in B_{\ell_{p'}^m}$ and $b\in B_{\ell_{q'}^n}$, H\"older's inequality and monotonicity of finite-dimensional $\ell_r$ norms give
\[
 |a_1b_1\pm a_2b_2|
 \le\|(a_1,a_2)\|_q\|(b_1,b_2)\|_{q'}
 \le1.
\]
Thus $\|U\|_\eps=\|V\|_\eps=1$ and $\|U\pm V\|_\eps=2$.

For (ii), first suppose $p^{-1}+q^{-1}\ge1$.  Take $A=e_1\otimes f_1$ and $B=e_2\otimes f_2$.  Since $p\le q'$, the bilinear forms
\[
 b_\pm(x,y)=x_1y_1\pm x_2y_2
\]
have norm one on $\ell_p^m\times\ell_q^n$.  They show $\|A\pm B\|_\pi=2$, while $\|A\|_\pi=\|B\|_\pi=1$.

It remains to treat the genuinely subcritical case
\[
 s:=p^{-1}+q^{-1}<1.
\]
Consider the following off-diagonal construction: Set
\[
 a=e_1+e_2,\qquad a'=e_1-e_2,\qquad
 b=f_1+f_2,\qquad b'=f_1-f_2,
\]
and
\[
 x=a\otimes b,\qquad y=a'\otimes b'.
\]
The crossnorm identity gives
\[
 \|x\|_\pi=\|y\|_\pi=2^s.
\]
Write
\[
 d=e_1\otimes f_1+e_2\otimes f_2,
 \qquad
 d'=e_1\otimes f_2+e_2\otimes f_1.
\]
Since
\[
 d=\frac12\bigl((e_1+e_2)\otimes(f_1+f_2)
 +(e_1-e_2)\otimes(f_1-f_2)\bigr),
\]
one has $\|d\|_\pi\le2^s$.  Choose $t\in[1,\infty]$ so that
\[
 p^{-1}+q^{-1}+t^{-1}=1
\]
and define
\[
 \beta(u,v)=2^{s-1}(u_1v_1+u_2v_2).
\]
Three-factor H\"older gives $\|\beta\|\le1$, hence
\[
 \|d\|_\pi\ge \beta(d)=2^s.
\]
Therefore $\|d\|_\pi=2^s$.  Coordinate transposition in the second factor is an isometry, so also $\|d'\|_\pi=2^s$.  Since
\[
 x+y=2d,\qquad x-y=2d',
\]
we obtain
\[
 \|x+y\|_\pi=\|x-y\|_\pi=2^{s+1}.
\]
Consequently, the defining von Neumann--Jordan quotient of the pair
$(x,y)$ is equal to $2$.
\end{proof}

\begin{remark}\label{rem:off-diagonal-projective}
In the subcritical region $p^{-1}+q^{-1}<1$, the projective endpoint above is
not forced by the usual $\ell_1$ diagonal. The extremizing rank-one pair is
\[
 (e_1+e_2)\otimes(f_1+f_2),
 \qquad
 (e_1-e_2)\otimes(f_1-f_2),
\]
whose sum and difference produce diagonal and anti-diagonal $2\times2$
Hadamard patterns. This is precisely the off-diagonal Hadamard construction used in the proof.
\end{remark}

Let $E$ be finite-dimensional and $Z$ a Banach space.  Write $\operatorname{Sq}(E,Z)$ when there exist $A_k,B_k\in S_{\Lop(E,Z)}$ such that
\[
 \|A_k+B_k\|\to2,
 \qquad \|A_k-B_k\|\to2.
\]
If $Y$ is finite-dimensional, the canonical identifications give
\begin{equation}\label{eq:mixed-operator-models}
 X\oteps Y\cong\Lop(Y^*,X),
 \qquad
 \CNJ(X\otpi Y)=\CNJ(\Lop(Y,X^*)).
\end{equation}

\begin{theorem}\label{thm:fixed-pair}
Let $Y$ be finite-dimensional.  Then
\begin{align*}
 \CNJ(X\oteps Y)=2&\quad\Longleftrightarrow\quad
 \operatorname{Sq}(Y^*,X),\\
 \CNJ(X\otpi Y)=2&\quad\Longleftrightarrow\quad
 \operatorname{Sq}(Y,X^*).
\end{align*}
Equivalently, each condition is witnessed by an exact operator square after passing to an ultrapower of the range.
\end{theorem}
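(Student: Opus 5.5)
The plan is to reduce both equivalences to \cref{lem:endpoint-normalization}, applied through the identifications \eqref{eq:mixed-operator-models}. Since $Y$ is finite-dimensional, every element of $X\oteps Y$ has a finite representation. The map $x\otimes y\mapsto(y^*\mapsto y^*(y)x)$ is then an isometric isomorphism of $X\oteps Y$ onto $\Lop(Y^*,X)$. This holds because the injective norm of $\sum_j x_j\otimes y_j$ equals $\sup_{y^*\in B_{Y^*}}\|\sum_j y^*(y_j)x_j\|$, which is exactly the operator norm. Hence $\CNJ(X\oteps Y)=\CNJ(\Lop(Y^*,X))$.

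By \cref{lem:endpoint-normalization}, $\CNJ(\Lop(Y^*,X))=2$ holds if and only if there are unit operators $A_k,B_k$ with $\|A_k\pm B_k\|\to2$. This is the definition of $\operatorname{Sq}(Y^*,X)$, so the first equivalence follows. For the projective case, use \cref{lem:basic-cnj} to get $\CNJ(X\otpi Y)=\CNJ((X\otpi Y)^*)$. Then use the duality $(X\otpi Y)^*\cong\Lop(X,Y^*)\cong\Lop(Y,X^*)$, where the last isometry is the transpose $T\mapsto T^*|_Y$, with $Y$ reflexive. This is the second identity in \eqref{eq:mixed-operator-models}. Applying \cref{lem:endpoint-normalization} to $\Lop(Y,X^*)$ again gives the stated equivalence with $\operatorname{Sq}(Y,X^*)$.

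For the ultrapower reformulation, fix a free ultrafilter $\mathcal U$ and a finite-dimensional $E$ (either $Y^*$ or $Y$). The key observation is that $\Lop(E,Z)^{\mathcal U}\cong\Lop(E,Z^{\mathcal U})$ isometrically. To see this, represent a bounded sequence $(T_k)$ by $x\mapsto[(T_kx)_k]$. Since $E$ is finite-dimensional, the unit ball $B_E$ is compact and the $T_k$ are uniformly Lipschitz, so $\sup_{x\in B_E}$ commutes with the ultralimit. Surjectivity follows by choosing a basis of $E$ and lifting the images of the basis vectors coordinatewise.

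With this in hand, the two directions are as follows.
\begin{itemize}
\item Given $\operatorname{Sq}(E,Z)$, the classes $A=[(A_k)]$ and $B=[(B_k)]$ satisfy $\|A\|=\|B\|=1$ and $\|A\pm B\|=2$. This is an exact square in $\Lop(E,Z^{\mathcal U})$.
\item Conversely, an exact square $A,B\in S_{\Lop(E,Z^{\mathcal U})}$ lifts to representatives $(A_k),(B_k)$. After normalizing, these have norms tending to $1$ along $\mathcal U$ and $\|A_k\pm B_k\|\to2$ along $\mathcal U$. Extracting a subsequence gives $\operatorname{Sq}(E,Z)$.
\end{itemize}

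The main obstacle is this commuting of the operator norm with the ultralimit. It is the only place where finite-dimensionality of $E$ is used essentially. I would handle it via a finite $\delta$-net of $B_E$ together with the uniform bound $\sup_k\|T_k\|<\infty$. The rest consists of the routine identifications above, where it suffices to check that each is isometric and not merely isomorphic.
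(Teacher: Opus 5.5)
Your proposal is correct and takes the same route as the paper. The identifications \eqref{eq:mixed-operator-models} combined with \cref{lem:endpoint-normalization} give both equivalences, and the isometry $(\Lop(E,Z))_{\mathcal U}\cong\Lop(E,Z_{\mathcal U})$ for finite-dimensional $E$ turns approximate squares into exact ones via classes, and back via representatives; you only add details the paper omits, such as the compactness/net argument for that isometry (and note that the transpose isometry $\Lop(X,Y^*)\cong\Lop(Y,X^*)$ needs no reflexivity of $Y$).
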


\begin{proof}
Combine \eqref{eq:mixed-operator-models} with \cref{lem:endpoint-normalization}.  For an ultrapower $Z_{\mathcal U}$, finite dimensionality of $E$ gives the canonical isometry
\[
 (\Lop(E,Z))_{\mathcal U}\cong\Lop(E,Z_{\mathcal U}).
\]
Taking classes of an approximating square gives exact norm-two sums and differences; representatives give the converse.
\end{proof}

\REV{We next describe the same phenomenon through orthogonal contacts with
Euclidean spaces.}

If $d=\dim E\ge2$, define
\begin{equation}\label{eq:omega-perp}
 \omega_\perp(E)=\max\left\{
 \min\{\|Qu\|_2,\|Qv\|_2\}:
 \begin{array}{l}
 Q:E\to\ell_2^d,\ \|Q\|\le1,\ u,v\in S_E,\\[-1mm]
 \langle Qu,Qv\rangle=0
 \end{array}\right\}.
\end{equation}
We say that $E$ has the \emph{orthogonal-contact property} (OCP) if $\omega_\perp(E)=1$.

\begin{theorem}\label{thm:ocp}
Let $E$ be finite-dimensional with $\dim E\ge2$ and let $Z$ be infinite-dimensional.  Then
\begin{equation}\label{eq:ocp-bound}
 \CNJ(\Lop(E,Z))\ge
 \max\{\CNJ(Z),\CNJ(E^*),2\omega_\perp(E)^2\}.
\end{equation}
In addition, the following three conditions are equivalent:
\begin{enumerate}[label=\textup{(\roman*)}]
\item $E$ has OCP;
\item $\CNJ(\Lop(E,H))=2$ for an infinite-dimensional Hilbert space $H$;
\item $\CNJ(\Lop(E,Z))=2$ for every infinite-dimensional $Z$.
\end{enumerate}
Consequently, for $2\le\dim Y<\infty$,
\begin{align*}
 \CNJ(X\oteps Y)=2\ \text{for every infinite-dimensional }X
 &\Longleftrightarrow \omega_\perp(Y^*)=1,\\
 \CNJ(X\otpi Y)=2\ \text{for every infinite-dimensional }X
 &\Longleftrightarrow \omega_\perp(Y)=1.
\end{align*}
In particular, for every infinite-dimensional Hilbert space $H$,
\begin{align*}
 \CNJ(H\oteps Y)=2&\Longleftrightarrow \omega_\perp(Y^*)=1,\\
 \CNJ(H\otpi Y)=2&\Longleftrightarrow \omega_\perp(Y)=1.
\end{align*}
\end{theorem}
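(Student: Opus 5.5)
The plan is to prove the lower bound \eqref{eq:ocp-bound} first, term by term, and then obtain the equivalences and the tensor consequences from it.

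\emph{The terms $\CNJ(Z)$ and $\CNJ(E^*)$.} Both come from isometric embeddings combined with \cref{lem:basic-cnj}. Fix $\varphi\in S_{E^*}$; then $z\mapsto\varphi(\cdot)z$ embeds $Z$ isometrically into $\Lop(E,Z)$. Fix $z_0\in S_Z$; then $\psi\mapsto\psi(\cdot)z_0$ embeds $E^*$ isometrically into $\Lop(E,Z)$.

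\emph{The term $2\omega_\perp(E)^2$.} Let $Q,u,v$ attain the maximum in \eqref{eq:omega-perp}, and put $\omega=\omega_\perp(E)$. We may assume $Qv\neq0$. Let $R$ be the orthogonal reflection of $\ell_2^d$ in the hyperplane $(Qv)^\perp$. Then $R$ is unitary, $RQu=Qu$ (since $Qu\perp Qv$), and $RQv=-Qv$. Since $Z$ is infinite-dimensional, Dvoretzky's theorem gives, for each $\delta>0$, an embedding $J:\ell_2^d\to Z$ with $\|J\|\le1$ and $\|J^{-1}\|\le1+\delta$. Set $A=JQ$ and $B=JRQ$. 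Then $\|A\|,\|B\|\le1$, while
\[
(A+B)u=2JQu,\qquad (A-B)v=2JQv,
\]
so $\|A\pm B\|\ge2\omega/(1+\delta)$. The defining quotient of $(A,B)$ is therefore at least $2\omega^2/(1+\delta)^2$, and letting $\delta\to0$ gives the bound.

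\emph{Equivalence of (i)--(iii).} The implication (i)$\Rightarrow$(iii) follows from \eqref{eq:ocp-bound} and $\CNJ\le2$, and (iii)$\Rightarrow$(ii) is trivial. For (ii)$\Rightarrow$(i), use \cref{lem:endpoint-normalization} to obtain $A_k,B_k\in S_{\Lop(E,H)}$ with $\|A_k\pm B_k\|\to2$. The ranges of $A_k$ and $B_k$ lie in a subspace of dimension at most $2d$. Transporting it by a unitary into a fixed copy of $\ell_2^{2d}$ and using compactness, we pass to limits $A,B:E\to\ell_2^{2d}$ with $\|A\|=\|B\|=1$ and $\|A\pm B\|=2$. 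Pick $u,v\in S_E$ with $\|(A+B)u\|=2$ and $\|(A-B)v\|=2$. Strict convexity of Hilbert space then forces $Au=Bu=:a$ and $Av=-Bv=:b$, with $\|a\|=\|b\|=1$. Now define
\[
Px=\tfrac1{\sqrt2}(Ax,Bx)\in\ell_2^{2d}\oplus_2\ell_2^{2d}.
\]
Then $\|P\|\le1$, $\|Pu\|=\|Pv\|=1$, and $\langle Pu,Pv\rangle=\tfrac12(\langle a,b\rangle-\langle a,b\rangle)=0$. Since $\dim P(E)\le d$, composing $P$ with a linear isometry of $P(E)$ into $\ell_2^d$ gives an admissible $Q$ with value $1$. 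As trivially $\omega_\perp(E)\le1$, $E$ has OCP.

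\emph{Tensor consequences.} These follow from \eqref{eq:mixed-operator-models}. For the injective norm take $E=Y^*$ and $Z=X$. For the projective norm take $E=Y$ and $Z=X^*$, which is infinite-dimensional whenever $X$ is. For the converse directions, specialize to $X=H$ and use $H^*\cong H$; this also yields the Hilbert-space statements.

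The step I expect to need the most care is the compactness extraction in (ii)$\Rightarrow$(i). One must check that norms of operators from the finite-dimensional $E$ pass to the limit, which holds since all the operators act between fixed finite-dimensional spaces. Alternatively one can argue in an ultrapower via \cref{thm:fixed-pair}, since ultrapowers of $H$ are Hilbert spaces. In the complex case, the orthogonality computation should be rechecked with sesquilinear inner products; it still holds because the two cross terms cancel exactly.
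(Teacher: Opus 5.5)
Your proposal is correct and follows the paper's proof. The lower bound uses the same ingredients: rank-one embeddings, and the reflection-plus-Dvoretzky pair $A=JQ$, $B=JRQ$. The implication (ii)$\Rightarrow$(i) is the same finite-dimensional compactness argument followed by a contractive column map into a Hilbert direct sum.

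The only difference is bookkeeping. The paper first passes to $(S_n\pm T_n)/2$ and uses the parallelogram identity to get $Bu=Av=0$. You keep the original pair, use strict convexity to get $Au=Bu$ and $Av=-Bv$, and scale by $1/\sqrt2$. Your map $\tfrac1{\sqrt2}(A,B)$ coincides with the paper's $Q_0$ up to the unitary $(x,y)\mapsto\tfrac1{\sqrt2}(x+y,x-y)$ on $K\oplus_2K$.
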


\begin{proof}
Fix $t<\omega_\perp(E)$ and choose a contraction $Q:E\to H_0$ and $u,v\in S_E$ such that $Qu\perp Qv$ and both image norms exceed $t$.  Let $R$ be the orthogonal reflection fixing $Qu$ and reversing $Qv$.  By Dvoretzky's theorem, for every $\delta>0$ there exists an operator
$J:H_0\to Z$ satisfying
\[
 (1+\delta)^{-1}\|h\|_2\le\|Jh\|\le\|h\|_2.
\]
For $A=JQ$ and $B=JRQ$, one has $\|A\|,\|B\|\le1$ and
\[
 \|A+B\|\ge2(1+\delta)^{-1}t,
 \qquad
 \|A-B\|\ge2(1+\delta)^{-1}t.
\]
Insertion in \eqref{eq:def-cnj}, followed by $\delta\downarrow0$ and $t\uparrow\omega_\perp(E)$, proves the orthogonal-contact term in \eqref{eq:ocp-bound}; the factor terms come from rank-one operator embeddings.

OCP therefore implies (iii), and (iii) trivially implies (ii).  Conversely, assume
\[
 \CNJ(\Lop(E,H))=2.
\]
Choose $S_n,T_n\in S_{\Lop(E,H)}$ with $\|S_n\pm T_n\|\to2$, and set
\[
 A_n=\frac{S_n+T_n}{2},\qquad B_n=\frac{S_n-T_n}{2}.
\]
Then $\|A_n\|\to1$, $\|B_n\|\to1$, while $\|A_n+B_n\|=\|S_n\|=1$ and $\|A_n-B_n\|=\|T_n\|=1$.  Fix a basis $e_1,\ldots,e_d$ of $E$.  The Gram matrices of
\[
 A_ne_1,\ldots,A_ne_d,B_ne_1,\ldots,B_ne_d
\]
form a bounded family of positive semidefinite matrices.  Passing to a subsequence, they converge entrywise to a positive semidefinite matrix $G$.  Realize $G$ as the Gram matrix of vectors in a finite-dimensional Hilbert space $K$, and use those vectors to define operators $A,B:E\to K$. Since $E$ is finite-dimensional, convergence of the associated quadratic
forms is uniform on $S_E$. Consequently,
\[
 \|A\|=\|B\|=1,\qquad \|A+B\|\le1,\qquad \|A-B\|\le1.
\]
Choose $u,v\in S_E$ with $\|Au\|=1$ and $\|Bv\|=1$.  The parallelogram identity yields, for every $e\in E$,
\[
 \|Ae\|^2+\|Be\|^2
 =\frac12\bigl(\|(A+B)e\|^2+\|(A-B)e\|^2\bigr)
 \le \|e\|^2.
\]
Therefore $Bu=0$ and $Av=0$.  The map
\[
 Q_0e=(Ae,Be)\in K\oplus_2 K
\]
is a contraction and sends $u,v$ to orthogonal unit vectors.  Orthogonally projecting onto $\operatorname{span}\{Q_0u,Q_0v\}$ and identifying this plane with $\ell_2^2\subset\ell_2^d$ gives the contraction required in \eqref{eq:omega-perp}.  Hence $E$ has OCP.  The tensor statements follow from \eqref{eq:mixed-operator-models}.
\end{proof}

\begin{example}\label{ex:ocp-examples}
let $H_6=\R^2$ with
\[
 \|(a,b)\|_{H_6}=\max\{|a|,|b|,|a+b|\}.
\]
Then $H_6^*$ is isometric to $H_6$, while
\[
 \CNJ(H_6)<2,
 \qquad \omega_\perp(H_6)<1.
\]
Consequently, if $H$ is infinite-dimensional Hilbert,
\[
 \CNJ(H\oteps H_6)<2,
 \qquad \CNJ(H\otpi H_6)<2.
\]
\end{example}

\begin{proof}
For $H_6$, suppose a Hilbert contraction had two orthogonal unit contact points.  Strict convexity of the Hilbert ball allows both contact points to be replaced by vertices of the hexagon.  Up to signs, two independent vertices come from $e_1,e_2,e_1-e_2$; the remaining vertex is their sum or difference, whose image would have Hilbert norm $\sqrt2$, contradicting contractivity.  Compactness gives $\omega_\perp(H_6)<1$.

It remains to verify $\CNJ(H_6)<2$.  Otherwise finite-dimensional compactness and \cref{lem:endpoint-normalization} would give $x,y\in S_{H_6}$ with
\[
 \|x+y\|=\|x-y\|=2.
\]
Choose norming functionals for $x+y$ and $x-y$.  They show, respectively in the same-sign and opposite-sign quadrants, that
\[
 \|\alpha x+\beta y\|=|\alpha|+|\beta|
 \qquad(\alpha,\beta\in\R).
\]
Hence $H_6$ would be linearly isometric to $\ell_1^2$, whose unit ball is a parallelogram, contradicting the six extreme points of the hexagonal unit ball.  Therefore $\CNJ(H_6)<2$.  Apply \cref{thm:ocp} and self-duality.
\end{proof}

Finally, we demonstrate that the two tensor norms may exhibit genuinely different geometric behavior. The following example highlights this separation in a minimal and explicit setting.

The two tensor constants need not even be ordered.  The following example is dimensionally minimal and uses only rational data.

\begin{example}\label{ex:separation}
Let $X=\R^2$ with
\[
 \|(s,t)\|_X=\max\{|s|,|t|,|s-t|\},
\]
and let $Y=\R^2$ with
\[
 \|(s,t)\|_Y=\max\left\{|s|,
 \left|\frac45s+\frac35t\right|,
 \left|\frac35s+\frac45t\right|,|t|,
 \left|-\frac{12}{25}s+\frac{16}{25}t\right|\right\}.
\]
Then
\[
 \CNJ(X\oteps Y)=2,
 \qquad
 \CNJ(X\otpi Y)<2.
\]
For the dual pair the strict inequality is reversed.  Moreover,
\[
 \CNJ(X)<2,
 \qquad
 \CNJ(Y)<2,
\]
so both factors are uniformly non-square.
\end{example}

\begin{proof}
Write
\[
 a_1=(1,0),\ a_2=(0,1),\ a_3=(1,-1)
\]
and
\[
 b_1=(1,0),\ b_2=\left(\frac45,\frac35\right),\
 b_3=\left(\frac35,\frac45\right),\ b_4=(0,1),\
 b_5=\left(-\frac{12}{25},\frac{16}{25}\right).
\]
These are, up to signs, the vertices of $B_{X^*}$ and $B_{Y^*}$.  Under the matrix identification,
\[
 \|M\|_\eps=\max_{i,j}|a_i^TMb_j|.
\]
Set
\[
 U=\begin{pmatrix}-1&-1/3\\-2/3&2/3\end{pmatrix},
 \qquad
 V=\begin{pmatrix}-1&13/16\\-1/4&-3/16\end{pmatrix}.
\]
Exact substitution gives $\|U\|_\eps=\|V\|_\eps=1$, while
\[
 a_1^T(U+V)b_1=-2,
 \qquad
 a_3^T(U-V)b_4=-2.
\]
Thus $\|U+V\|_\eps=\|U-V\|_\eps=2$ and the injective constant is two.

For projective strictness use finite-dimensional duality:
\[
 \CNJ(X\otpi Y)=\CNJ(X^*\oteps Y^*).
\]
The vertices of $B_X$ are, up to signs,
\[
 x_1=(1,0),\quad x_2=(1,1),\quad x_3=(0,1),
\]
and those of $B_Y$ are
\[
 y_1=\left(1,\frac13\right),\quad
 y_2=\left(\frac57,\frac57\right),\quad
 y_3=\left(\frac13,1\right),\quad
 y_4=\left(-\frac34,1\right),\quad
 y_5=\left(-1,\frac{13}{16}\right).
\]
Hence $Z=X^*\oteps Y^*$ has the polyhedral norm
\[
 \|M\|_Z=\max_{1\le i\le3,\,1\le j\le5}|x_i^TMy_j|.
\]
For a finite-dimensional polyhedral space with norming functionals $f_k$, the equality $\CNJ=2$ holds exactly when there exist $k,\ell$ and unit vectors $x,y$ with
\[
 f_k(x)=f_k(y)=1,
 \qquad f_\ell(x)=1,\quad f_\ell(y)=-1.
\]
Indeed, these equalities are precisely the simultaneous facet contacts that force $\|x+y\|=\|x-y\|=2$.

Enumerating the vertices determined by the thirty rational signed inequalities above gives the following exact incidence certificate.  For a fixed first facet $F_{1j}^+$, let $\mathcal P_j$ and $\mathcal N_j$ be the second facets meeting it with equal and opposite signs, respectively:
\begin{center}
\small
\begin{tabular}{@{}c l l@{}}
\toprule
$j$&$\mathcal P_j$&$\mathcal N_j$\\
\midrule
1&$12,21,22,23,34,35$&$15,24,25,31,32,33$\\
2&$11,13,21,22,23,24,35$&$25,31,32,33,34$\\
3&$12,14,21,22,23,24,25$&$31,32,33,34,35$\\
4&$13,15,22,23,24,25,31$&$21,32,33,34,35$\\
5&$14,23,24,25,31,32$&$11,21,22,33,34,35$\\
\bottomrule
\end{tabular}
\end{center}
Every row has $\mathcal P_j\cap\mathcal N_j=\varnothing$.  The isometry
\[
 \begin{pmatrix}1&-1\\1&0\end{pmatrix}
\]
of $X$ cycles the first facet index up to sign, so the table covers all cases.  The facet criterion therefore yields $\CNJ(Z)<2$.  Finally,
\[
 \CNJ(X^*\oteps Y^*)=\CNJ(X\otpi Y)<2,
 \qquad
 \CNJ(X^*\otpi Y^*)=\CNJ(X\oteps Y)=2,
\]
which proves the dual reversal. Finally, the canonical factor embeddings and
the strict projective inequality give
\[
 \max\{\CNJ(X),\CNJ(Y)\}\le \CNJ(X\otpi Y)<2.
\]
Hence both factors are uniformly non-square, using the standard equivalence
$\CNJ(E)<2$ $\Longleftrightarrow$ $E$ is uniformly non-square.
\end{proof}

The unit balls and the corresponding two-dimensional tensor sections are displayed in \cref{fig:rational-separation}.

\begin{figure}[!htbp]
\centering
\includegraphics[width=0.86\textwidth]{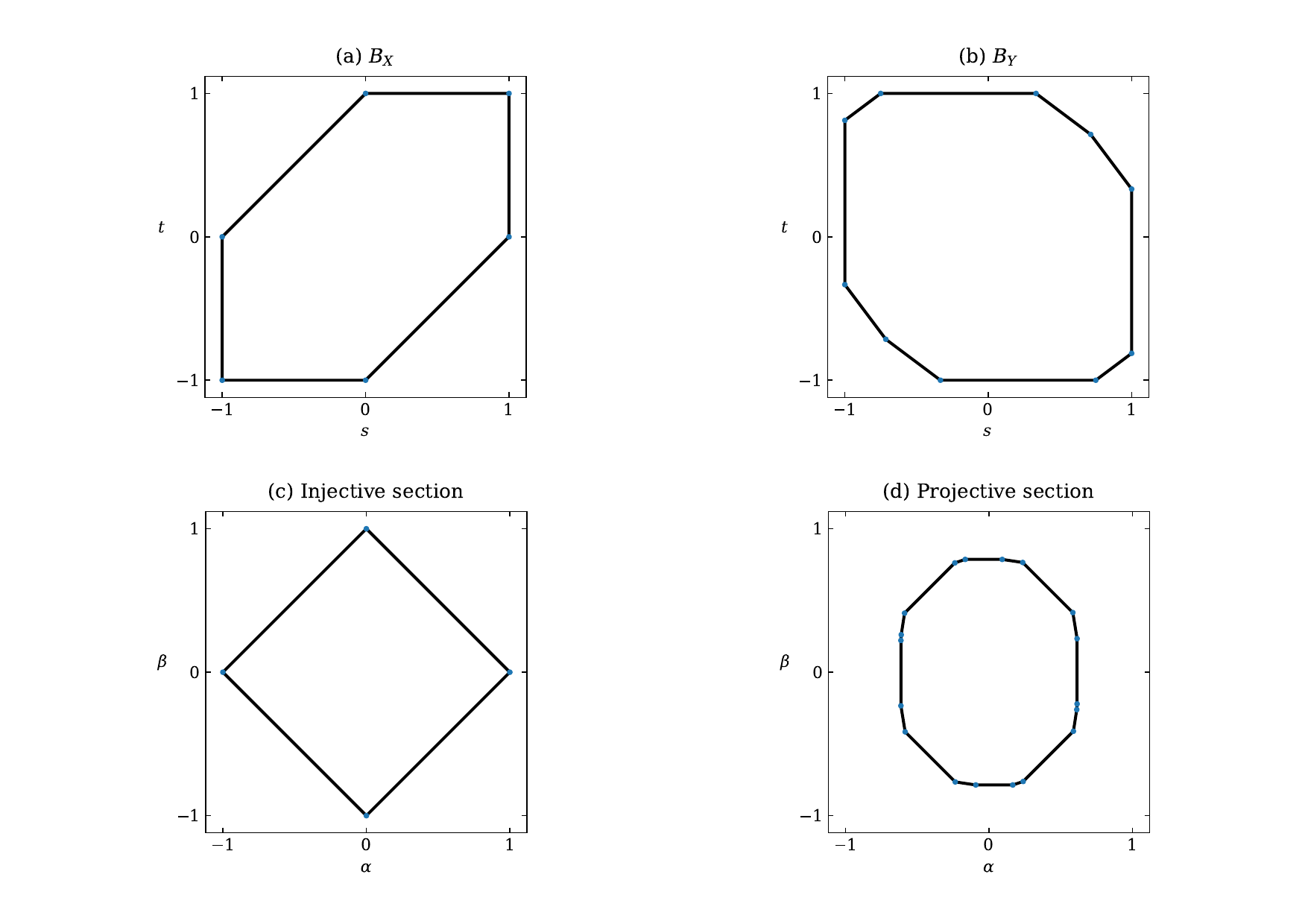}
\caption{The unit balls $B_X$ and $B_Y$, together with the two-dimensional tensor sections associated with the rational pair in \cref{ex:separation}. The injective section displays the endpoint certificate, while the projective section visualizes the strictness obstruction verified by facet incidence.}
\label{fig:rational-separation}
\end{figure}

\begin{remark}
The finite certificate is reproducible without numerical tolerances.  A vertex is obtained by solving four linearly independent active signed equations among fifteen absolute constraints; infeasible systems are discarded and the remaining rational vertices determine every nonempty face incidence.
\end{remark}

\section{Dimension-sensitive tensor estimates and geometric profiles}

\REV{We next make the dependence on dimension and Euclidean structure
explicit.}

Define the worst Euclidean two-plane index by
\[
 \Theta_2(Z)=\sup\{d(E,\ell_2^2):E\subset Z,\ \dim E=2\},
\]
with $\Theta_2(Z)=1$ when $\dim Z=1$.  John's theorem gives $1\le\Theta_2(Z)\le\sqrt2$ \cite{John1948}.

\begin{theorem}\label{thm:bilateral}
For non-zero $X,Y$,
\begin{align*}
 \max\left\{\CNJ(X),\CNJ(Y),
 \frac{2}{\eta_2(X^*)^2\eta_2(Y^*)^2}\right\}
 &\le \CNJ(X\otpi Y)\\
 &\le \Theta_2(\Lop(X,Y^*))^2\le2,
\end{align*}
where $1/\infty=0$.
\end{theorem}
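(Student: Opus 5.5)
The plan is to prove the three inequalities separately. The lower bound is essentially \cref{thm:two-plane}, extended to degenerate cases. The middle inequality comes from dualizing and applying a Banach--Mazur comparison on two-dimensional subspaces. The final bound $\le 2$ is the universal estimate.

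\emph{Lower bound.} If $\dim X\ge2$ and $\dim Y\ge2$, this is exactly \eqref{eq:pibound} of \cref{thm:two-plane}. If one factor, say $X$, is one-dimensional, then $X^*$ has no two-dimensional subspace. Hence $\eta_2(X^*)=\infty$ and the third term is $1/\infty=0$. In that case $X\otpi Y\cong Y$ isometrically, so the bound reduces to $\max\{\CNJ(X),\CNJ(Y)\}\le\CNJ(Y)$, which holds because $\CNJ(X)=1$. In general, the factor terms come from the isometric embeddings $y\mapsto x_0\otimes y$ with $\|x_0\|=1$ (crossnorm) together with \cref{lem:basic-cnj}.

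\emph{Upper bound.} First use $\CNJ(Z)=\CNJ(Z^*)$ from \cref{lem:basic-cnj} and the identification $(X\otpi Y)^*\cong\Lop(X,Y^*)$. This gives $\CNJ(X\otpi Y)=\CNJ(\Lop(X,Y^*))$. By \cref{lem:basic-cnj} this constant is the supremum of $\CNJ(E)$ over subspaces $E$ of dimension at most two. It therefore suffices to prove the standard comparison $\CNJ(E)\le d(E,\ell_2^2)^2$, together with the trivial case $\dim E=1$, where $\CNJ(E)=1$. For the comparison, take an isomorphism $T:E\to\ell_2^2$ with $\|x\|\le\|Tx\|_2\le d\|x\|$ for all $x$. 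Then
\[
\|x+y\|^2+\|x-y\|^2\le\|T(x+y)\|_2^2+\|T(x-y)\|_2^2=2(\|Tx\|_2^2+\|Ty\|_2^2)\le2d^2(\|x\|^2+\|y\|^2).
\]
Letting $d$ approach $d(E,\ell_2^2)$ gives the comparison. Taking the supremum over $E$ then gives $\CNJ(\Lop(X,Y^*))\le\Theta_2(\Lop(X,Y^*))^2$.

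\emph{Last step and main obstacle.} The bound $\Theta_2^2\le2$ follows from John's theorem, since $d(E,\ell_2^2)\le\sqrt2$ for two-dimensional $E$. It can also be read off as the universal bound $\CNJ\le2$. The only delicate points are bookkeeping. One must make sure the identification $\CNJ(X\otpi Y)=\CNJ(\Lop(X,Y^*))$ is legitimate when the completed projective tensor product is infinite-dimensional; this is fine because \cref{lem:basic-cnj} asserts $\CNJ(Z)=\CNJ(Z^*)$ for arbitrary Banach spaces. One must also treat the degenerate conventions consistently: $\eta_2=\infty$ with $1/\infty=0$, and $\Theta_2=1$ in dimension one. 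I expect no genuine analytical obstacle beyond the Banach--Mazur comparison above.
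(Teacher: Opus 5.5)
Your proposal is correct and follows the paper's proof: the lower bound is \eqref{eq:pibound}, and your separate treatment of a one-dimensional factor covers a case the paper passes over. The upper bound likewise comes from $\CNJ(Z)=\CNJ(Z^*)$ with $Z=\Lop(X,Y^*)$, two-dimensional localization, $\CNJ(E)\le d(E,\ell_2^2)^2$, and John's theorem for $\Theta_2^2\le2$; only your aside that $\Theta_2^2\le2$ ``can also be read off'' from $\CNJ\le2$ is invalid, since that bound controls $\CNJ$ rather than $\Theta_2$, but John's theorem already suffices.
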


\begin{proof}
The lower estimate is \eqref{eq:pibound}. For every two-dimensional
$E\subset Z$, the standard Banach--Mazur estimate gives
$\CNJ(E)\le d(E,\ell_2^2)^2$. Two-dimensional localization therefore yields
$\CNJ(Z)\le\Theta_2(Z)^2$. Apply this to
$Z=\Lop(X,Y^*)\cong(X\otpi Y)^*$ and use self-duality.
\end{proof}

\begin{corollary}\label{cor:factor-only-hilbertian-distortion}
Let $X$ and $Y$ admit onto isomorphisms $T:X\to H$ and $S:Y\to K$ onto Hilbert spaces with
$\dim H,\dim K\ge2$.  Put $c_X=\CNJ(X)$ and $c_Y=\CNJ(Y)$, and assume
\[
 \|T\|\,\|T^{-1}\|\le \sqrt{c_X},
 \qquad
 \|S\|\,\|S^{-1}\|\le \sqrt{c_Y}.
\]
Then
\begin{equation}\label{eq:factor-only-hilbertian}
 \max\left\{c_X,c_Y,\frac{2}{c_Xc_Y}\right\}
 \le \CNJ(X\otpi Y)\le2.
\end{equation}
Thus, under this structural hypothesis, the auxiliary Euclidean-plane indices in the general projective lower bound can be eliminated from the final estimate.
\end{corollary}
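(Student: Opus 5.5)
The plan is to deduce \eqref{eq:factor-only-hilbertian} from the lower estimate of \cref{thm:bilateral}, i.e.\ \eqref{eq:pibound}, together with the universal bound $\CNJ\le 2$. The terms $c_X$ and $c_Y$ already appear in \eqref{eq:pibound}; they come from the canonical isometric embeddings $x\mapsto x\otimes y_0$ and $y\mapsto x_0\otimes y$ with $\|x_0\|=\|y_0\|=1$. The upper bound $\CNJ(X\otpi Y)\le 2$ is property (i). So everything reduces to the single inequality
\[
 \frac{2}{\eta_2(X^*)^2\eta_2(Y^*)^2}\ \ge\ \frac{2}{c_Xc_Y},
\]
which amounts to proving $\eta_2(X^*)^2\le c_X$ and, by symmetry, $\eta_2(Y^*)^2\le c_Y$.

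To bound $\eta_2(X^*)$, I would dualize the isomorphism $T$. The adjoint $T^*:H^*\to X^*$ is an onto isomorphism with $\|T^*\|\,\|(T^*)^{-1}\|=\|T\|\,\|T^{-1}\|\le\sqrt{c_X}$, and $H^*$ is again a Hilbert space of dimension at least two. Next, pick any two-dimensional subspace $G\subset H^*$; it is isometric to $\ell_2^2$. Set $E=T^*(G)\subset X^*$, so that $\dim E=2$. The restriction $T^*|_G:G\to E$ has norm at most $\|T^*\|$, and its inverse has norm at most $\|(T^*)^{-1}\|$. Hence $d(E,\ell_2^2)\le \|T\|\,\|T^{-1}\|\le\sqrt{c_X}$. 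Taking the infimum gives $\eta_2(X^*)\le\sqrt{c_X}$, so $\eta_2(X^*)^2\le c_X$. The same argument with $S$ gives $\eta_2(Y^*)^2\le c_Y$.

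Substituting these bounds into \eqref{eq:pibound} yields the lower bound in \eqref{eq:factor-only-hilbertian}. The hypothesis $\dim X,\dim Y\ge2$ required by \cref{thm:two-plane} holds because $T$ and $S$ are onto isomorphisms onto spaces of dimension at least two.

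The only delicate point is the dualization step. In the complex case one identifies $H^*$ with a Hilbert space through the conjugate-linear Riesz map; this does not affect Banach--Mazur distances of real-linear versus complex-linear two-planes, because $G$ itself carries the inner-product norm of $H^*$. I expect no genuine obstacle beyond this bookkeeping and the check that $\|T^*\|=\|T\|$ and $\|(T^*)^{-1}\|=\|(T^{-1})^*\|=\|T^{-1}\|$.
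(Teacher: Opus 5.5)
Your proof is correct, but it takes a different route from the paper's.

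You feed the hypothesis into the two-plane bound \eqref{eq:pibound}. Everything then reduces to the local estimate $\eta_2(X^*)\le\|T\|\,\|T^{-1}\|$. You get this by pushing a two-dimensional subspace of the Hilbert space $H^*$ through the adjoint $T^*$, using $\|T^*\|=\|T\|$ and $\|(T^*)^{-1}\|=\|T^{-1}\|$.

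The paper never passes to $X^*$ or $Y^*$. It uses the metric mapping property to obtain the global isomorphism $T\otimes S:X\otpi Y\to H\otpi K$, with inverse $T^{-1}\otimes S^{-1}$ and distortion at most $\sqrt{c_Xc_Y}$. It then applies the elementary rescaling estimate $\CNJ(Z)\ge\CNJ(W)/D^2$ for a $D$-isomorphism $Z\to W$ directly to the whole, possibly infinite-dimensional, tensor product. Combined with $\CNJ(H\otpi K)=2$ from \cref{thm:hilbert-cert}, this gives the bound.

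What each route buys:
\begin{itemize}
\item The paper's route is more self-contained. It needs neither \cref{thm:two-plane} nor the machinery behind \eqref{eq:pibound}, namely the isometric embedding $X^*\oteps Y^*\subset(X\otpi Y)^*$ and the identity $\CNJ(Z)=\CNJ(Z^*)$.
\item Your route makes the corollary's last sentence literal: the Euclidean-plane indices are bounded by exactly $c_X$ and $c_Y$.
\item Your route also shows that only two-dimensional subspaces of $X^*$ and $Y^*$ matter. So the same conclusion holds under the weaker hypotheses $\eta_2(X^*)^2\le c_X$ and $\eta_2(Y^*)^2\le c_Y$, which do not require $X$ or $Y$ to be isomorphic to a Hilbert space at all.
\end{itemize}
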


\begin{proof}
The metric mapping property gives an isomorphism
\[
 T\otimes S:X\otpi Y\longrightarrow H\otpi K
\]
whose distortion is at most
\[
 \|T\|\,\|T^{-1}\|\,\|S\|\,\|S^{-1}\|
 \le \sqrt{c_Xc_Y}.
\]
The rescaling argument for the $\CNJ$ distortion estimate does not use finite
dimensionality, and therefore
\[
 \CNJ(X\otpi Y)
 \ge \frac{\CNJ(H\otpi K)}{c_Xc_Y}
 =\frac{2}{c_Xc_Y}
\]
by \cref{thm:hilbert-cert}. The two factor terms follow from the canonical
factor embeddings, and the universal upper bound is $2$.
\end{proof}

\begin{corollary}\label{cor:worst-plane-rigidity}
Let $X,Y$ be non-zero Banach spaces.
\begin{enumerate}[label=\textup{(\roman*)}]
\item If $X$ and $Y$ are infinite-dimensional, then
\[
 \Theta_2(\Lop(X,Y^*))=\sqrt2.
\]
\item If $H$ and $K$ are Hilbert spaces with $\dim H,\dim K\ge2$, then
\[
 \Theta_2(\Lop(H,K))=\sqrt2.
\]
\item If
\[
 \Theta_2(\Lop(X,Y^*))<\sqrt2,
\]
then $X\otpi Y$ is uniformly non-square.
\end{enumerate}
\end{corollary}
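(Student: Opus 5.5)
The corollary follows by combining the two-sided estimate of \cref{thm:bilateral} with the extremal results of Section 3. The key relation throughout is
\[
\CNJ(X\otpi Y)\le\Theta_2(\Lop(X,Y^*))^2\le2.
\]
Whenever $\CNJ(X\otpi Y)=2$, this sandwich forces $\Theta_2(\Lop(X,Y^*))^2=2$, that is, $\Theta_2=\sqrt2$.

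For (i), \cref{thm:rigidity} gives $\CNJ(X\otpi Y)=2$ when $X$ and $Y$ are infinite-dimensional, so the sandwich yields $\Theta_2(\Lop(X,Y^*))=\sqrt2$. The upper bound $\Theta_2\le\sqrt2$ comes from John's theorem, as recorded before \cref{thm:bilateral}.

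For (ii), \cref{thm:hilbert-cert} gives $\CNJ(H\otpi K)=2$. We identify $\Lop(H,K)$ with $\Lop(H,(\overline{K})^*)$ via the Riesz map; in the complex case this uses the conjugate space $\overline K$. Applying \cref{thm:bilateral} with $X=H$ and $Y=\overline K$ gives $\CNJ(H\otpi\overline K)=2$, since $\overline K$ is again a Hilbert space of dimension at least two. Hence $\Theta_2(\Lop(H,K))=\sqrt2$.

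A more direct route is also available, and I would include it as a check. Take the two-dimensional subspace of $\Lop(H,K)$ spanned by $e_1\otimes f_1$ and $e_2\otimes f_2$, viewed as operators. It is isometric to $\ell_\infty^2$ in the operator norm, and $d(\ell_\infty^2,\ell_2^2)=\sqrt2$. This shows $\Theta_2\ge\sqrt2$ immediately.

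For (iii), the sandwich gives $\CNJ(X\otpi Y)\le\Theta_2^2<2$ whenever $\Theta_2(\Lop(X,Y^*))<\sqrt2$. By property (iii) of the preliminaries ($\CNJ<2$ if and only if uniformly non-square), $X\otpi Y$ is uniformly non-square.

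The only delicate point is the degenerate cases in which \cref{thm:bilateral} is applied. If one factor is one-dimensional, $\Lop(X,Y^*)$ may be isometric to a space of small dimension, so I would check that the convention $\Theta_2=1$ for one-dimensional spaces is consistent. In (ii) I would handle the complex-conjugation bookkeeping carefully, and fall back on the explicit $\ell_\infty^2$ section if needed. Otherwise the corollary is a direct reading of the sandwich inequality.
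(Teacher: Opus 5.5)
Your proposal is correct and follows the paper's proof. Parts (i) and (iii) are read off from the sandwich $\CNJ(X\otpi Y)\le\Theta_2(\Lop(X,Y^*))^2\le2$ of \cref{thm:bilateral} together with \cref{thm:rigidity}, and (ii) combines \cref{thm:hilbert-cert} with the same sandwich; your complex-case identification $\Lop(H,K)\cong\Lop(H,(\overline K)^*)$ and the direct isometric $\ell_\infty^2$ section are both valid refinements that the paper leaves implicit.
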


\begin{proof}
For (i), \cref{thm:rigidity,thm:bilateral} give
\[
 2=\CNJ(X\otpi Y)\le \Theta_2(\Lop(X,Y^*))^2\le2,
\]
so equality holds throughout.  For (ii), combine \cref{thm:hilbert-cert,thm:bilateral} in the same way.  Finally, (iii) follows directly from \cref{thm:bilateral}, because then
\[
 \CNJ(X\otpi Y)\le \Theta_2(\Lop(X,Y^*))^2<2,
\]
and $\CNJ<2$ is equivalent to uniform non-squareness.
\end{proof}

\section{Renorming, Hilbert thresholds and superreflexivity}

\REV{The value $2$ is not an isomorphic invariant. The relevant renorming
envelope is}
\[
 \tCNJ(Z)=\inf\{\CNJ(Z,|\!|\!|\cdot|\!|\!|):
 |\!|\!|\cdot|\!|\!|\text{ is an equivalent norm on }Z\}.
\]
\REV{Enflo's renorming theorem and the results of Kato--Takahashi imply}
\begin{equation}\label{eq:renorm-super}
 \tCNJ(Z)<2\quad\Longleftrightarrow\quad Z\text{ is superreflexive}
\end{equation}
\cite{Enflo1972,KatoTakahashi1997}.

The factor inequality has the following renorming form.

\begin{proposition}\label{prop:restriction}
Let $W=X\widehat\otimes_\alpha Y$, $\alpha\in\{\eps,\pi\}$, and let $|\!|\!|\cdot|\!|\!|$ be any norm on $W$ equivalent to the canonical tensor norm.  Fix $x_0\in S_X$, $y_0\in S_Y$, and define
\[
 |x|_X=|\!|\!|x\otimes y_0|\!|\!|,
 \qquad |y|_Y=|\!|\!|x_0\otimes y|\!|\!|.
\]
Then these are equivalent factor norms and
\[
 \max\{\CNJ(X,|\cdot|_X),\CNJ(Y,|\cdot|_Y)\}
 \le\CNJ(W,|\!|\!|\cdot|\!|\!|).
\]
\end{proposition}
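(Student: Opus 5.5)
The plan is to show that the maps $x\mapsto x\otimes y_0$ and $y\mapsto x_0\otimes y$ are linear isometric embeddings of $(X,|\cdot|_X)$ and $(Y,|\cdot|_Y)$ into $(W,|\!|\!|\cdot|\!|\!|)$. Then we apply the monotonicity of $\CNJ$ under isometric subspaces from \cref{lem:basic-cnj}. By symmetry it suffices to treat $X$.

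\emph{Step 1: $|\cdot|_X$ is an equivalent norm on $X$.} The map $j:X\to W$, $j(x)=x\otimes y_0$, is linear. Since both canonical tensor norms are crossnorms, $\|j(x)\|_\alpha=\|x\|\,\|y_0\|=\|x\|$.

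Equivalence of $|\!|\!|\cdot|\!|\!|$ with $\|\cdot\|_\alpha$ gives constants $0<c\le C$ with $c\|w\|_\alpha\le|\!|\!|w|\!|\!|\le C\|w\|_\alpha$. Hence
\[
 c\|x\|\le |x|_X\le C\|x\|,\qquad x\in X.
\]
Absolute homogeneity and the triangle inequality for $|\cdot|_X$ are inherited from $|\!|\!|\cdot|\!|\!|$ through the linearity of $j$. Definiteness follows from the lower bound. So $|\cdot|_X$ is a norm equivalent to the original one, and $(X,|\cdot|_X)$ is complete.

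\emph{Step 2: isometric embedding.} By construction $|\!|\!|j(x)|\!|\!|=|x|_X$. Thus $j$ is a linear isometry of $(X,|\cdot|_X)$ onto the subspace $j(X)\subset(W,|\!|\!|\cdot|\!|\!|)$. Because $j$ is linear, $j(x\pm y)=j(x)\pm j(y)$. Therefore every quotient in \eqref{eq:def-cnj} computed for a pair $(x,y)$ in $(X,|\cdot|_X)$ coincides with the quotient for the pair $(j(x),j(y))$ in $W$. Taking suprema gives $\CNJ(X,|\cdot|_X)=\CNJ(j(X))\le\CNJ(W,|\!|\!|\cdot|\!|\!|)$, which is \cref{lem:basic-cnj}.

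\emph{Step 3: the second factor and conclusion.} The same argument applies to $y\mapsto x_0\otimes y$, and taking the maximum of the two inequalities proves the claim.

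There is no real obstacle here. The only point needing care is in Step 1: the equivalence constants must be transferred correctly through the crossnorm identity, which uses $\|x_0\|=\|y_0\|=1$. This ensures the new factor norms are genuinely equivalent, not merely seminorms. The $\CNJ$ comparison itself is purely formal, since it is defined pointwise on pairs and respects linear isometries.
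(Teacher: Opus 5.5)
Your proposal is correct and follows essentially the same route as the paper. The crossnorm identity $\|x\otimes y_0\|_\alpha=\|x\|$ transfers the equivalence constants to the factor norms, the canonical factor maps become isometries for the restricted norms, and subspace monotonicity of $\CNJ$ from \cref{lem:basic-cnj} gives the inequality; you simply spell out the details the paper leaves implicit.
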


\begin{proof}
Equivalence follows from the equivalence of the tensor norms and the crossnorm identity on elementary tensors.  With the restricted norms, both canonical factor maps are isometries, so subspace monotonicity applies.
\end{proof}

\begin{corollary}\label{cor:isomorphic-factor}
For non-zero Banach spaces $X,Y$ and $\alpha\in\{\eps,\pi\}$,
\[
\max\{\tCNJ(X),\tCNJ(Y)\}
 \le \tCNJ(X\widehat\otimes_\alpha Y)\le2.
\]
\end{corollary}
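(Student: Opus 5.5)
The upper bound $\tCNJ(X\widehat\otimes_\alpha Y)\le2$ is immediate. Every equivalent norm on $W=X\widehat\otimes_\alpha Y$ has von Neumann--Jordan constant at most $2$ by the universal bound (i). Hence the infimum defining $\tCNJ(W)$ is at most $2$.

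For the lower bound I would take an arbitrary equivalent norm $|\!|\!|\cdot|\!|\!|$ on $W$ and apply \cref{prop:restriction}. Fix $x_0\in S_X$ and $y_0\in S_Y$; these exist because the factors are non-zero. The proposition produces norms $|\cdot|_X$ on $X$ and $|\cdot|_Y$ on $Y$, and it asserts that they are equivalent to the original norms, so they are admissible competitors in the infima defining $\tCNJ(X)$ and $\tCNJ(Y)$. Therefore
\[
 \tCNJ(X)\le\CNJ(X,|\cdot|_X)\le\CNJ(W,|\!|\!|\cdot|\!|\!|),
\]
and likewise $\tCNJ(Y)\le\CNJ(W,|\!|\!|\cdot|\!|\!|)$. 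Since this holds for every equivalent norm on $W$, taking the infimum over $|\!|\!|\cdot|\!|\!|$ gives $\max\{\tCNJ(X),\tCNJ(Y)\}\le\tCNJ(W)$.

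The only point that needs care is the equivalence of the restricted norms, since it is what makes them admissible competitors. I would check it directly. If $c\|w\|_\alpha\le|\!|\!|w|\!|\!|\le C\|w\|_\alpha$, the crossnorm identity gives
\[
 c\|x\|\le|x|_X\le C\|x\|,
\]
because $\|x\otimes y_0\|_\alpha=\|x\|\|y_0\|=\|x\|$. The restricted functional is a norm because $x\mapsto x\otimes y_0$ is linear and injective. The argument for $|\cdot|_Y$ is the same. With this settled, the corollary is a formal consequence of \cref{prop:restriction}.
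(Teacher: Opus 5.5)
Your proposal is correct and is essentially the paper's proof: apply \cref{prop:restriction} to an arbitrary equivalent norm on $X\widehat\otimes_\alpha Y$, observe that the induced factor norms are admissible competitors in the infima defining $\tCNJ(X)$ and $\tCNJ(Y)$, take the infimum over tensor renormings, and invoke the universal upper bound $2$. Your explicit check of the equivalence constants via the crossnorm identity $\|x\otimes y_0\|_\alpha=\|x\|$ simply spells out the one-line justification the paper gives in the proof of \cref{prop:restriction}.
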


\begin{proof}
Apply \cref{prop:restriction} to an arbitrary equivalent norm on $X\widehat\otimes_\alpha Y$.  The induced norms on the canonical copies of $X$ and $Y$ are particular equivalent factor norms, so their von Neumann--Jordan constants dominate $\tCNJ(X)$ and $\tCNJ(Y)$, respectively.  Taking the infimum over all equivalent tensor norms gives the lower bound; the upper bound is universal.
\end{proof}

The genuinely tensorial obstruction is quantitative.  We first isolate its stable finite-dimensional form.

\begin{proposition}\label{prop:stable-linf}
Let $X$ and $Y$ be infinite-dimensional Banach spaces.  For every $n\in\mathbb N$ and every $\delta>0$:
\begin{enumerate}[label=\textup{(\roman*)}]
\item $X\oteps Y$ contains an $n$-dimensional subspace $G_\eps$ such that
\[
 d(G_\eps,\ell_\infty^n)<1+\delta;
\]
\item $(X\otpi Y)^*=\Lop(X,Y^*)$ contains an $n$-dimensional subspace $G_\pi$ such that
\[
 d(G_\pi,\ell_\infty^n)<1+\delta.
\]
\end{enumerate}
Consequently, $\ell_\infty$ is finitely representable in $X\oteps Y$ and in $(X\otpi Y)^*$.
\end{proposition}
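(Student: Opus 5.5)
The plan is to find an almost-isometric copy of $\ell_\infty^n$ by Dvoretzky's theorem in each factor and then use the classical fact that $\ell_2^n\oteps\ell_2^n$ is the space of $n\times n$ matrices with the operator norm. This space contains the diagonal matrices, which are isometric to $\ell_\infty^n$. The injective tensor norm respects subspaces isometrically (injectivity), so everything reduces to Euclidean sections of $X$ and $Y$.

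For (i), fix $n$ and $\delta>0$. By Dvoretzky's theorem, choose $n$-dimensional subspaces $E\subset X$ and $F\subset Y$ together with isomorphisms $T:\ell_2^n\to E$ and $S:\ell_2^n\to F$ satisfying $\|T\|\,\|T^{-1}\|<(1+\delta)^{1/2}$ and $\|S\|\,\|S^{-1}\|<(1+\delta)^{1/2}$. Since $\eps$ is injective, $E\oteps F$ sits isometrically inside $X\oteps Y$. Since $\eps$ is a uniform crossnorm, the metric mapping property gives
\[
 d(E\oteps F,\ \ell_2^n\oteps\ell_2^n)\le \|T\|\|T^{-1}\|\|S\|\|S^{-1}\|<1+\delta .
\]
Under the identification $\ell_2^n\oteps\ell_2^n\cong\Lop(\ell_2^n,\ell_2^n)$, the space $D=\operatorname{span}\{e_i\otimes e_i\}$ consists of the diagonal operators, and $\|\sum_i c_i e_i\otimes e_i\|_\eps=\max_i|c_i|$. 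Hence $D$ is isometric to $\ell_\infty^n$. Put $G_\eps=(T\otimes S)(D)$. The restriction of $T\otimes S$ to $D$ is an isomorphism onto $G_\eps$ with distortion less than $1+\delta$, so $d(G_\eps,\ell_\infty^n)<1+\delta$.

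For (ii), apply the same argument to the operator space $\Lop(X,Y^*)$. By Dvoretzky's theorem, take $F'\subset Y^*$ with $d(F',\ell_2^n)$ close to $1$, realised by $S':\ell_2^n\to F'$. In $X$, take $E\subset X$ almost Euclidean and a norm-one projection-free substitute: the Hahn--Banach extensions are what we need here. The simplest route is to choose a quotient of $X$ that is almost $\ell_2^n$, i.e.\ an operator $Q:X\to\ell_2^n$ with $\|Q\|\le1$ such that $Q(B_X)\supset(1+\delta)^{-1}B_{\ell_2^n}$. Such a $Q$ exists by applying Dvoretzky's theorem to $X^*$ to get an almost Euclidean subspace $E^\ast\subset X^\ast$; restriction to $E^\ast$ then gives a weak$^*$-continuous identification $X\to (E^\ast)^\ast$, which is a quotient map and is almost Euclidean. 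Next define
\[
 \Phi:\Lop(\ell_2^n,\ell_2^n)\to\Lop(X,Y^*),\qquad \Phi(A)=S'AQ .
\]
Then $\|\Phi(A)\|\le\|S'\|\|A\|$. Because $Q$ maps $B_X$ onto a set containing $(1+\delta)^{-1}B_{\ell_2^n}$, we also get $\|\Phi(A)\|\ge(1+\delta)^{-1}\|S'^{-1}\|^{-1}\|A\|$. So $\Phi$ has distortion at most $(1+\delta)$ times the distortion of $S'$, and $G_\pi=\Phi(D)$ is the required subspace. Alternatively, one can use the duality $(X\otpi Y)^*$ together with the isometric embedding of $X^*\oteps Y^*$ into $\Lop(X,Y^*)$ and apply (i) to the infinite-dimensional spaces $X^*$ and $Y^*$. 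I would present this second route, since it reuses (i) directly; the only point to check is that $X^*\oteps Y^*\hookrightarrow\Lop(X,Y^*)$ is isometric, which is the standard identification of the injective norm with the operator norm on finite-rank operators.

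The final sentence follows by letting $n\to\infty$ and $\delta\to0$. The main obstacle is purely bookkeeping: making the identification of $\ell_2^n\oteps\ell_2^n$ with the matrix operator norm explicit, and checking that the diagonal carries the $\ell_\infty^n$ norm. The latter holds because a diagonal operator has operator norm equal to its largest diagonal modulus.
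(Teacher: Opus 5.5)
Your proposal is correct and follows the paper's proof. Part (i) is the same Dvoretzky-plus-diagonal argument via injectivity and the metric mapping property. In part (ii), both of your routes produce exactly the paper's operator $\Phi(a)=BD_aA^*Q_E$: your preferred second route identifies it as the image of the diagonal tensor $\sum_j a_j\,Ae_j\otimes Be_j$ under the isometric inclusion $X^*\oteps Y^*\subset\Lop(X,Y^*)$, so that applying (i) to $X^*$ and $Y^*$ does the norm bookkeeping the paper does by hand with the metric quotient $Q_E$. For the final sentence, add (as the paper does) that every finite-dimensional subspace of $\ell_\infty$ embeds almost isometrically into some $\ell_\infty^N$, since letting $n\to\infty$ alone only gives the spaces $\ell_\infty^n$ uniformly.
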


\begin{proof}
Choose $\eta>0$ so that $(1+\eta)^2<1+\delta$.

For the injective product, Dvoretzky's theorem gives $n$-dimensional subspaces $E\subset X$ and $F\subset Y$ and isomorphisms $A:\ell_2^n\to E$ and $B:\ell_2^n\to F$ with
\[
 \|A\|\|A^{-1}\|<1+\eta,
 \qquad
 \|B\|\|B^{-1}\|<1+\eta.
\]
The diagonal map
\[
 W:\ell_\infty^n\longrightarrow \ell_2^n\oteps\ell_2^n,
 \qquad
 W(a_1,\ldots,a_n)=\sum_{j=1}^n a_j e_j\otimes e_j,
\]
is an isometry.  Since the injective norm preserves subspaces, $(A\otimes B)W$ is an embedding into $X\oteps Y$ with distortion less than $(1+\eta)^2<1+\delta$.

For the projective dual, apply Dvoretzky's theorem to $X^*$ and $Y^*$.  Choose an $n$-dimensional subspace $E\subset X^*$, an isomorphism $A:\ell_2^n\to E$, an $n$-dimensional subspace $F\subset Y^*$, and an isomorphism $B:\ell_2^n\to F$, all with distortion less than $1+\eta$.  The restriction map
\[
 Q_E:X\longrightarrow E^*,
 \qquad Q_E(x)(x^*)=x^*(x),
\]
is a metric quotient.  Using $A^*:E^*\to\ell_2^n$ as the Euclidean identification, define, for $a=(a_j)\in\ell_\infty^n$,
\[
 \Phi(a)=B D_a A^*Q_E\in\Lop(X,Y^*),
 \qquad D_a e_j=a_j e_j.
\]
After the harmless rescaling of $A$ and $B$ used in the Banach--Mazur distance, the metric quotient property and the estimates for $A,B$ give
\[
 \frac{1}{(1+\eta)^2}\|a\|_\infty
 \le \|\Phi(a)\|
 \le \|a\|_\infty.
\]
Thus the distortion is less than $(1+\eta)^2<1+\delta$.  The final assertion follows because every finite-dimensional subspace of $\ell_\infty$ embeds almost isometrically into some $\ell_\infty^N$.
\end{proof}

The following quantitative lemma converts these stable square blocks into an obstruction for every equivalent norm.

\begin{lemma}\label{lem:hadamard}
Suppose $c=\CNJ(E)<2$, let $N=2^m$, and let $p\in\{1,\infty\}$.  If
\[
 T:\ell_p^N\to E
\]
is an isomorphic embedding, then
\[
 \|T\|\|T^{-1}\|\ge\left(\frac2c\right)^{m/2}.
\]
Consequently, a space that contains either $\ell_1^n$ or $\ell_\infty^n$ uniformly for all $n$ must have von Neumann--Jordan constant two.
\end{lemma}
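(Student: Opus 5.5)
The plan is to iterate the defining inequality of $\CNJ$ along the dyadic (Walsh/Rademacher) splitting of $\{1,\dots,N\}$. This gives two-sided type/cotype-$2$ estimates with constants $c^{m}$ and $c^{-m}$. We then test them on the unit vector basis of $\ell_p^N$.

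\textbf{Step 1: averaged estimates.} By \eqref{eq:def-cnj}, for all $u,w\in E$,
\[
\tfrac12\bigl(\|u+w\|^2+\|u-w\|^2\bigr)\le c\,(\|u\|^2+\|w\|^2).
\]
Applying the same inequality to the pair $u+w,\ u-w$ gives
\[
4(\|u\|^2+\|w\|^2)\le 2c\bigl(\|u+w\|^2+\|u-w\|^2\bigr),
\]
that is,
\[
\tfrac12\bigl(\|u+w\|^2+\|u-w\|^2\bigr)\ge c^{-1}(\|u\|^2+\|w\|^2).
\]
Now take $x_1,\dots,x_N\in E$ with $N=2^m$, and let $\mathbb E$ denote the average over independent signs $\varepsilon\in\{\pm1\}^N$. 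We claim by induction on $m$ that
\[
c^{-m}\sum_{i=1}^N\|x_i\|^2\le \mathbb E\Bigl\|\sum_{i=1}^N\varepsilon_i x_i\Bigr\|^2\le c^{m}\sum_{i=1}^N\|x_i\|^2 .
\]
For the inductive step, write $\sum\varepsilon_ix_i=a+b$, where $a$ is the sum over the first half and $b$ the sum over the second. The distribution of $a+b$ is the same as that of $a+\varepsilon_0 b$ for an extra independent sign $\varepsilon_0$, since $b$ is symmetric. Averaging over $\varepsilon_0$ first and using the two displayed inequalities with $u=a$, $w=b$ gives
\[
c^{\mp1}\bigl(\mathbb E\|a\|^2+\mathbb E\|b\|^2\bigr)
\]
as the lower and upper bounds. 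The induction hypothesis applied to each half (of size $2^{m-1}$) then finishes the step.

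\textbf{Step 2: testing on $\ell_p^N$.} Put $x_i=Te_i$. For $p=\infty$ we have $\|\sum\varepsilon_ie_i\|_\infty=1$, so the middle quantity is at most $\|T\|^2$. Also $\|Te_i\|\ge\|T^{-1}\|^{-1}$, so the lower bound gives
\[
\|T\|^2\ge c^{-m}N\|T^{-1}\|^{-2}.
\]
For $p=1$ we have $\|\sum\varepsilon_ie_i\|_1=N$, so the middle quantity is at least $N^2\|T^{-1}\|^{-2}$. Also $\|Te_i\|\le\|T\|$, so the upper bound gives
\[
N^2\|T^{-1}\|^{-2}\le c^mN\|T\|^2.
\]
In both cases
\[
(\|T\|\|T^{-1}\|)^2\ge N/c^m=(2/c)^m,
\]
which is the claim.

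\textbf{Step 3: the consequence.} Suppose $E$ contains $\ell_p^n$ ($p\in\{1,\infty\}$) with distortion at most $K$ for every $n$. Since $\ell_p^{2^m}$ sits isometrically inside $\ell_p^n$ for $n\ge 2^m$, we may restrict to $n=2^m$. If $c<2$, Step 2 gives $K\ge(2/c)^{m/2}$ for every $m$, which tends to infinity, a contradiction. Hence $\CNJ(E)=2$.

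The only delicate point is the induction in Step 1, specifically the justification of inserting the extra sign $\varepsilon_0$ via symmetry of the Rademacher average. Everything else is bookkeeping.
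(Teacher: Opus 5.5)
Your proposal is correct and follows essentially the paper's argument: both iterate the two-point inequality $\|u+v\|^2+\|u-v\|^2\le 2c(\|u\|^2+\|v\|^2)$ along the dyadic splitting $m$ times, obtaining a two-sided estimate with the same constants $c^{\pm m}$ after normalizing by $N$, and then test it on the unit vector basis of $\ell_p^N$. The only differences are cosmetic: the paper averages over the $N$ rows of the Sylvester--Hadamard matrix rather than over all Rademacher sign patterns, and it gets the lower estimate from $H_m^2=NI$ rather than from your reverse two-point inequality applied to $u\pm w$.
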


\begin{proof}
Let $H_m=(h_{ij})$ be the Sylvester--Hadamard matrix of order $N$.  On $E^N$ use the Hilbertian sum norm.  Iterating
\[
 \|u+v\|^2+\|u-v\|^2\le2c(\|u\|^2+\|v\|^2)
\]
through the $m$ Hadamard stages gives, for $x=(x_j)_{j=1}^N$,
\[
 \|H_mx\|_2\le(2c)^{m/2}\|x\|_2.
\]
Since $H_m^2=NI$, applying the same estimate to $H_mx$ yields
\[
 \|H_mx\|_2\ge\frac{N}{(2c)^{m/2}}\|x\|_2.
\]

Put $x_j=Te_j$.  If $p=\infty$, then
\[
 \|x\|_2\ge\frac{\sqrt N}{\|T^{-1}\|},
 \qquad
 \|H_mx\|_2^2
 =\sum_{i=1}^N\|T(h_{ij})_{j=1}^N\|^2
 \le N\|T\|^2,
\]
because each Hadamard row has $\ell_\infty$-norm one.  The lower Hadamard estimate therefore gives
\[
 \|T\|\|T^{-1}\|
 \ge\frac{N}{(2c)^{m/2}}
 =\left(\frac2c\right)^{m/2}.
\]

If $p=1$, then
\[
 \|x\|_2\le\sqrt N\,\|T\|,
\]
while every Hadamard row has $\ell_1$-norm $N$, so
\[
 \|H_mx\|_2
 \ge\frac{N\sqrt N}{\|T^{-1}\|}.
\]
Combining this with the upper Hadamard estimate gives the same lower bound for
$\|T\|\|T^{-1}\|$.  Since $c<2$, the bound tends to infinity with $m$.
\end{proof}

The preceding quantitative estimate immediately yields the following
inversion and finite-block consequences.

\begin{corollary}\label{cor:hadamard-inversion}
Let $N=2^m$ with $m\ge1$, let $p\in\{1,\infty\}$, and let
\[
 T:\ell_p^N\longrightarrow E
\]
be an isomorphic embedding with distortion $D=\|T\|\|T^{-1}\|$.  Then
\begin{equation}\label{eq:hadamard-inverted}
 \CNJ(E)\ge 2D^{-2/m}.
\end{equation}
Consequently, if for an unbounded set of integers $m$ there are embeddings
$T_m:\ell_p^{2^m}\to E$ with distortions $D_m$ satisfying
\[
 \log D_m=o(m),
\]
then $\CNJ(E)=2$.  Equivalently, along powers of two it is enough to have distortions $N^{o(1)}$; uniform containment is not required.
\end{corollary}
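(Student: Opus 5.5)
The plan is to read \eqref{eq:hadamard-inverted} off \cref{lem:hadamard} by solving for $c$, handling the endpoint case separately, and then to pass to the limit along the given subsequence of $m$.

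\textbf{Step 1: the inequality \eqref{eq:hadamard-inverted}.} Write $c=\CNJ(E)$. If $c=2$, then $2D^{-2/m}\le 2=c$ because $D\ge1$ for any isomorphic embedding, so there is nothing to prove. If $c<2$, \cref{lem:hadamard} applies to $T$ and gives $D\ge (2/c)^{m/2}$. Both sides are positive, so raising to the power $2/m$ gives $D^{2/m}\ge 2/c$, which rearranges to $c\ge 2D^{-2/m}$. In fact this rearrangement also works directly when $c=2$, since the Hadamard iteration in the lemma's proof never used $c<2$; only the final ``tends to infinity'' remark did. So the case split is only a safety check.

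\textbf{Step 2: the asymptotic criterion.} Suppose $\log D_m=o(m)$ along an unbounded set $M$ of integers. For each $m\in M$, Step 1 gives
\[
 \CNJ(E)\ge 2\exp\!\left(-\frac{2\log D_m}{m}\right).
\]
As $m\to\infty$ in $M$, the exponent $-2\log D_m/m$ tends to $0$, so the right-hand side tends to $2$. Hence $\CNJ(E)\ge2$. Combined with the universal bound $\CNJ\le2$, this yields $\CNJ(E)=2$.

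\textbf{Step 3: the reformulation.} With $N=2^m$ we have $\log N=m\log2$. Thus $\log D_m=o(m)$ is the same statement as $\log D_m=o(\log N)$, that is, $D_m=N^{o(1)}$. A uniform bound $D_m\le C$ trivially satisfies this, so uniform containment is not needed.

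I do not expect a real obstacle. The only point needing care is that \cref{lem:hadamard} is stated under the hypothesis $c<2$, which is why Step 1 either splits off the case $c=2$ or notes that the lemma's estimate remains valid there.
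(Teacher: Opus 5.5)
Your proposal is correct and follows the paper's proof: you treat $\CNJ(E)=2$ separately, apply \cref{lem:hadamard} to get $D\ge(2/c)^{m/2}$ and rearrange, then let $m\to\infty$ along the given set and invoke the universal bound $\CNJ\le2$. Your use of $D\ge1$ in the endpoint case, and your remark that the Hadamard estimate never needs $c<2$, are both valid; they spell out what the paper's ``nothing to prove'' leaves implicit.
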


\begin{proof}
Put $c=\CNJ(E)$.  If $c=2$ there is nothing to prove.  Otherwise \cref{lem:hadamard} gives
\[
 D\ge\left(\frac2c\right)^{m/2},
\]
and rearranging yields \eqref{eq:hadamard-inverted}.  If $\log D_m=o(m)$, the right-hand side of \eqref{eq:hadamard-inverted} tends to two, while the universal upper bound is two.
\end{proof}

\begin{theorem}\label{thm:ordinary-euclidean-blocks}
Let $N=2^m$ with $m\ge1$.  Then
\begin{align*}
 \CNJ(X\oteps Y)
 &\ge\max\left\{\CNJ(X),\CNJ(Y),
  2\bigl(\eta_N(X)\eta_N(Y)\bigr)^{-2/m}\right\},\\
 \CNJ(X\otpi Y)
 &\ge\max\left\{\CNJ(X),\CNJ(Y),
  2\bigl(\eta_N(X^*)\eta_N(Y^*)\bigr)^{-2/m}\right\}.
\end{align*}
As above, an infinite Euclidean index is interpreted as contributing the value zero to the corresponding last term.  For $N=2$ these estimates reduce to the rank-two bounds in \cref{eq:epsbound,eq:pibound}.
\end{theorem}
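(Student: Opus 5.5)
The factor terms $\CNJ(X)$ and $\CNJ(Y)$ come, as before, from the canonical isometric embeddings $x\mapsto x\otimes y_0$ and $y\mapsto x_0\otimes y$ together with \cref{lem:basic-cnj}. So the work lies in the Euclidean term. The plan is to copy the diagonal $\ell_\infty^N$ block from \cref{prop:stable-linf}, but with the Euclidean sections now only nearly optimal, of distortion close to $\eta_N(X)$ and $\eta_N(Y)$, rather than $1+\eta$. We then feed the resulting embedding into \cref{cor:hadamard-inversion}. If $\eta_N(X)$ or $\eta_N(Y)$ is infinite, the last term is zero and there is nothing to prove, so assume both are finite.

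For the injective estimate, fix $\theta>1$ and choose $N$-dimensional subspaces $E\subset X$ and $F\subset Y$ with isomorphisms $A:\ell_2^N\to E$ and $B:\ell_2^N\to F$ satisfying $\|A\|\|A^{-1}\|<\theta\,\eta_N(X)$ and $\|B\|\|B^{-1}\|<\theta\,\eta_N(Y)$. The diagonal map $W:\ell_\infty^N\to\ell_2^N\oteps\ell_2^N$ is an isometry, exactly as in \cref{prop:stable-linf}. The injective norm respects subspaces and satisfies the metric mapping property, so $(A\otimes B)W:\ell_\infty^N\to E\oteps F\subset X\oteps Y$ is an embedding. Its norm is at most $\|A\|\|B\|$, and its inverse on the range has norm at most $\|A^{-1}\|\|B^{-1}\|$. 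Hence its distortion is $D<\theta^2\eta_N(X)\eta_N(Y)$. Now \eqref{eq:hadamard-inverted} with $p=\infty$ gives
\[
\CNJ(X\oteps Y)\ge 2D^{-2/m}\ge 2\theta^{-4/m}\bigl(\eta_N(X)\eta_N(Y)\bigr)^{-2/m},
\]
and letting $\theta\downarrow1$ yields the claim.

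For the projective estimate, I will work in the dual $\Lop(X,Y^*)\cong(X\otpi Y)^*$ and use $\CNJ(Z)=\CNJ(Z^*)$ from \cref{lem:basic-cnj}. Take near-optimal Euclidean sections $E\subset X^*$ and $F\subset Y^*$ with isomorphisms $A,B$, and form $\Phi(a)=BD_aA^*Q_E$ as in \cref{prop:stable-linf}. Here $Q_E:X\to E^*$ is a metric quotient, so $\|\Phi(a)\|=\|BD_aA^*\|_{E^*\to F}$. This quantity is at most $\|B\|\|A\|\|a\|_\infty$ and at least $\|a\|_\infty/(\|A^{-1}\|\|B^{-1}\|)$. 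Thus $\Phi:\ell_\infty^N\to\Lop(X,Y^*)$ has distortion below $\theta^2\eta_N(X^*)\eta_N(Y^*)$. Applying \cref{cor:hadamard-inversion} to $(X\otpi Y)^*$ and letting $\theta\downarrow1$ gives the projective bound. For $N=2$ we have $m=1$, and the exponent $-2/m=-2$ recovers \eqref{eq:epsbound} and \eqref{eq:pibound}.

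The main obstacle is the lower estimate for $\Phi$. One must check that the identification $A^*:E^*\to\ell_2^N$ contributes exactly the factor $\|A^{-1}\|$ in the inverse bound. One must also check that $\|D_a\|_{\ell_2\to\ell_2}=\|a\|_\infty$ is detected on the range of $A^*Q_E$. This holds because $A^*$ is surjective, so $Q_E$ maps the unit ball of $X$ onto a set whose image under $A^*$ contains a ball of radius $1/\|(A^*)^{-1}\|=1/\|A^{-1}\|$. Apart from this point, the only care needed is with constants in the injective subspace and mapping property. No rescaling is needed, since distortion is scale-invariant.
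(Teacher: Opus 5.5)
Your proposal is correct and follows the paper's proof step for step. In the injective case you transport the diagonal $\ell_\infty^N$ block through near-optimal Euclidean sections via $A\otimes B$, and in the projective case you use the dual diagonal operators $BD_aA^*Q_E$ in $\Lop(X,Y^*)$; both are then fed into \cref{cor:hadamard-inversion}, with self-duality of $\CNJ$ for the projective bound. Your explicit lower bound for $\Phi(a)$, via the quotient property of $Q_E$ and $\|(A^*)^{-1}\|=\|A^{-1}\|$, supplies the detail the paper leaves to ``exactly as in'' \cref{prop:stable-linf}.
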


\begin{proof}
Fix $D_X>\eta_N(X)$ and $D_Y>\eta_N(Y)$ and choose $N$-dimensional Euclidean sections of $X$ and $Y$ with distortions below $D_X$ and $D_Y$.  The injective metric mapping property transports the diagonal copy of $\ell_\infty^N$ in $\ell_2^N\oteps\ell_2^N$ into $X\oteps Y$ with distortion below $D_XD_Y$.  Apply \cref{cor:hadamard-inversion} and let $D_X\downarrow\eta_N(X)$, $D_Y\downarrow\eta_N(Y)$.

For the projective estimate choose instead $N$-dimensional Euclidean sections $E\subset X^*$ and $F\subset Y^*$ with distortions below $D_X>\eta_N(X^*)$ and $D_Y>\eta_N(Y^*)$.  The restriction map $Q_E:X\to E^*$ is a metric quotient.  Exactly as in the projective-dual construction of \cref{prop:stable-linf}, the diagonal operator map embeds $\ell_\infty^N$ into
\[
 (X\otpi Y)^*=\Lop(X,Y^*)
\]
with distortion below $D_XD_Y$. Apply \cref{cor:hadamard-inversion} in the
dual space and use self-duality of $\CNJ$. The factor terms follow from the
canonical factor embeddings.
\end{proof}

\begin{theorem}\label{thm:isomorphic-saturation}
If $X$ and $Y$ are infinite-dimensional and $\alpha\in\{\eps,\pi\}$, then every norm equivalent to the canonical norm on $X\widehat\otimes_\alpha Y$ has von Neumann--Jordan constant two.  Equivalently,
\[
 \tCNJ(X\widehat\otimes_\alpha Y)=2.
\]
\end{theorem}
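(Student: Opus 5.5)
The plan is to combine the stable $\ell_\infty^n$-blocks of \cref{prop:stable-linf} with the quantitative Hadamard obstruction of \cref{lem:hadamard}. The key observation is that finite representability of $\ell_\infty$ survives equivalent renorming with a uniformly bounded loss. Fix an equivalent norm $|\!|\!|\cdot|\!|\!|$ on $W=X\widehat\otimes_\alpha Y$. Choose constants $a,b>0$ with $a\|w\|_\alpha\le|\!|\!|w|\!|\!|\le b\|w\|_\alpha$. Any isomorphic embedding $T:\ell_\infty^N\to(W,\|\cdot\|_\alpha)$ with distortion $D$ then becomes, as a map into $(W,|\!|\!|\cdot|\!|\!|)$, an embedding with distortion at most $(b/a)D$.

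\emph{Injective case.} By \cref{prop:stable-linf}(i), for every $m$ there is an embedding $T_m:\ell_\infty^{2^m}\to X\oteps Y$ with distortion less than $2$, say. In the new norm the distortion is therefore at most $2b/a$, which is independent of $m$. Suppose $c=\CNJ(W,|\!|\!|\cdot|\!|\!|)<2$. Then \cref{lem:hadamard} gives $2b/a\ge(2/c)^{m/2}$ for every $m$, which is impossible. Equivalently, \cref{cor:hadamard-inversion} applies with $\log D_m$ bounded, hence $o(m)$, and yields $c=2$.

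\emph{Projective case.} Here the $\ell_\infty$ blocks live in the dual $W^*=\Lop(X,Y^*)$, so I pass to duals. The dual norm $|\!|\!|\cdot|\!|\!|^*$ on $W^*$ is equivalent to the canonical operator norm, with constants $1/b$ and $1/a$. By \cref{prop:stable-linf}(ii), $(W^*,|\!|\!|\cdot|\!|\!|^*)$ contains $\ell_\infty^{2^m}$ with distortion at most $2b/a$ for all $m$. The argument above then gives $\CNJ(W^*,|\!|\!|\cdot|\!|\!|^*)=2$. Finally, \cref{lem:basic-cnj} gives $\CNJ(Z)=\CNJ(Z^*)$, and applying it to $Z=(W,|\!|\!|\cdot|\!|\!|)$ transfers the value $2$ back to $W$. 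Since the equivalent norm was arbitrary, taking the infimum gives $\tCNJ(W)=2$.

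The main obstacle I expect is the bookkeeping for the projective case. The dual of the renormed space must be exactly $(W^*,|\!|\!|\cdot|\!|\!|^*)$, realised on the same linear space $\Lop(X,Y^*)$ with an equivalent norm, so that the blocks of \cref{prop:stable-linf}(ii) carry over with controlled distortion. Both points are standard: equivalent norms have equivalent duals, and the distortion estimate is just the two-sided comparison of norms. As a cross-check, the conclusion is consistent with \eqref{eq:renorm-super} and with Rueda Zoca's non-superreflexivity theorem, but the proof itself relies only on the quantitative lemma.
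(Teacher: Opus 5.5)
Your proof is correct. The injective case matches the paper: the uniform $\ell_\infty^n$ blocks of \cref{prop:stable-linf}(i) survive any equivalent renorming with distortion loss at most $b/a$, and the $p=\infty$ case of \cref{lem:hadamard} forces $\CNJ=2$.

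The projective case takes a genuinely different route. The paper works inside $W=X\otpi Y$ itself. It asserts that $W$ contains $\ell_1^n$ uniformly and applies the $p=1$ branch of \cref{lem:hadamard} directly to the renormed $W$; this is why the lemma is stated for both endpoints. You instead pass to the dual. You note that $(W,|\!|\!|\cdot|\!|\!|)^*$ is the space $\Lop(X,Y^*)$ with a norm $b/a$-equivalent to the operator norm. You then use the $\ell_\infty^n$ blocks of \cref{prop:stable-linf}(ii) there, and come back through the identity $\CNJ(Z)=\CNJ(Z^*)$ from \cref{lem:basic-cnj}.

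Your route uses exactly what \cref{prop:stable-linf} proves. The paper's one-line appeal tacitly needs an extra step, since that proposition gives $\ell_\infty^n$ blocks in $(X\otpi Y)^*$, not $\ell_1^n$ blocks in $X\otpi Y$. The step is short. With $A,B,E,F,Q_E$ as in that proof, use the two metric quotients $X\to E^*$ and $Y\to F^*$ to choose $x_k\in X$ with $Q_Ex_k=(A^*)^{-1}e_k$, and $y_k\in Y$ acting on $F$ as the functional biorthogonal to $(Be_j)$, both with controlled norms. Testing against $\Phi(a)$ then shows that the tensors $x_k\otimes y_k$ span an almost isometric copy of $\ell_1^n$.

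In exchange, the paper's route keeps every estimate in $W$ and does not need the self-duality theorem for $\CNJ$. Yours avoids constructing $\ell_1^n$ blocks in the projective tensor product, at the cost of invoking that theorem.
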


\begin{proof}
By \cref{prop:stable-linf}, $X\oteps Y$ contains $\ell_\infty^n$ uniformly for all $n$, while the projective tensor structure provides the corresponding uniform $\ell_1^n$ blocks in $X\otpi Y$.  Passing to an equivalent norm changes all finite-dimensional distortions by at most a fixed multiplicative constant, so the corresponding uniform containment persists.  The two-sided Hadamard obstruction in \cref{lem:hadamard} therefore forces every equivalent norm on either tensor product to have von Neumann--Jordan constant two.
\end{proof}

The following result strengthens the qualitative dichotomy to an exact numerical identity for the isomorphic envelope .

\begin{theorem}\label{thm:complete-renorming}
For non-zero $X,Y$ and $\alpha\in\{\eps,\pi\}$,
\begin{equation}\label{eq:exact-isomorphic-formula}
 \tCNJ(X\widehat\otimes_\alpha Y)=
 \begin{cases}
  2, & \dim X=\dim Y=\infty,\\
  \tCNJ(Y), & \dim X<\infty,\\
  \tCNJ(X), & \dim Y<\infty.
 \end{cases}
\end{equation}
When both factors are finite-dimensional, the last two lines agree and all three relevant finite-dimensional envelopes equal one.  Consequently,
\[
 \tCNJ(X\widehat\otimes_\alpha Y)<2
\]
if and only if
\[
 \tCNJ(X)<2,\qquad \tCNJ(Y)<2,
 \qquad \min\{\dim X,\dim Y\}<\infty.
\]
Equivalently, the tensor product is superreflexive if and only if both factors are superreflexive and one factor is finite-dimensional.
\end{theorem}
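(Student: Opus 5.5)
The plan is to split according to the dimensions of the factors, using throughout that $\tCNJ$ is an isomorphic invariant. If $Z_1\cong Z_2$ via a linear isomorphism $\Phi$, then every equivalent norm on $Z_2$ pulls back to an equivalent norm on $Z_1$ with the same von Neumann--Jordan constant, and conversely, so $\tCNJ(Z_1)=\tCNJ(Z_2)$.

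\emph{Case $\dim X=\dim Y=\infty$.} This is exactly \cref{thm:isomorphic-saturation}, so the first line of \eqref{eq:exact-isomorphic-formula} needs no further work.

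\emph{Case $n=\dim X<\infty$.} The case $\dim Y<\infty$ is symmetric. Fix a basis $x_1,\dots,x_n$ of $X$. I would use the linear bijection
\[
 \Psi:Y^n\to X\widehat\otimes_\alpha Y,\qquad (y_j)_j\mapsto \sum_{j=1}^n x_j\otimes y_j.
\]
Since $X\otimes Y$ is already complete, both tensor norms are equivalent on it to $\sum_j\|y_j\|$, with constants depending only on $n$ and the basis. Indeed, the upper bound follows from the triangle inequality and the crossnorm identity, and the lower bound follows by testing against coordinate functionals $x_j^*$ and norming $y^*$ in the injective norm, which is dominated by $\|\cdot\|_\pi$. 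Hence $X\widehat\otimes_\alpha Y\cong Y\oplus_2\cdots\oplus_2 Y$ isomorphically.

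The key computation is then $\CNJ(Y_0\oplus_2\cdots\oplus_2 Y_0)=\CNJ(Y_0)$ for any normed space $Y_0$. Writing $c=\CNJ(Y_0)$, we sum the coordinatewise inequalities $\|u_i+v_i\|^2+\|u_i-v_i\|^2\le 2c(\|u_i\|^2+\|v_i\|^2)$ to get the upper bound. The reverse inequality comes from the isometric copy of $Y_0$ in the first coordinate.

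Now take any equivalent norm $|\cdot|$ on $Y$ with $\CNJ(Y,|\cdot|)<\tCNJ(Y)+\epsilon$. The norm $\ell_2$-sum of $n$ copies of $(Y,|\cdot|)$ is an equivalent norm on $Y^n$, and transporting it by $\Psi$ gives $\tCNJ(X\widehat\otimes_\alpha Y)\le\tCNJ(Y)+\epsilon$. The reverse inequality $\tCNJ(X\widehat\otimes_\alpha Y)\ge\tCNJ(Y)$ is \cref{cor:isomorphic-factor}. This proves the second line of \eqref{eq:exact-isomorphic-formula}, and the third is symmetric.

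\emph{Both factors finite-dimensional.} Then $X$, $Y$ and $X\widehat\otimes_\alpha Y$ are all finite-dimensional, so each admits a Euclidean equivalent norm, and all three envelopes equal $1$ by property (ii). In particular the last two lines of the formula agree.

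\emph{Consequences.} The characterization of $\tCNJ(X\widehat\otimes_\alpha Y)<2$ is read off from the formula. If both factors are infinite-dimensional the value is $2$. Otherwise the value is the envelope of the other factor, or $1$ when both factors are finite-dimensional. A finite-dimensional factor automatically has envelope $1<2$, so the condition "$\tCNJ(X)<2$ and $\tCNJ(Y)<2$" is harmless for it. The superreflexivity statement then follows by applying \eqref{eq:renorm-super} to the tensor product and to each factor.

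I expect the only delicate point to be the isomorphism $X\widehat\otimes_\alpha Y\cong Y^n$ and the resulting transfer of renormings. One must check that the equivalence constants depend only on the fixed basis, and not on $y$, for both tensor norms. The $\ell_2$-sum identity is then a one-line computation.
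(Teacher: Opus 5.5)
Your proposal is correct and follows the paper's proof step for step. The lower bound comes from \cref{cor:isomorphic-factor}, the doubly infinite-dimensional case is \cref{thm:isomorphic-saturation}, and the finite-factor upper bound uses the same Hilbertian-sum renorming $\bigl(\sum_i|y_i|_Y^2\bigr)^{1/2}$ with coordinatewise von Neumann--Jordan inequalities, while the superreflexivity statement is read off via \eqref{eq:renorm-super}; your explicit check that both tensor norms are equivalent to $\sum_j\|y_j\|$ (so $X\otimes Y$ is already complete) fills in a step the paper only asserts.
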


\begin{proof}
The lower bound
\[
 \tCNJ(X\widehat\otimes_\alpha Y)
 \ge\max\{\tCNJ(X),\tCNJ(Y)\}
\]
is \cref{cor:isomorphic-factor}.  If both factors are infinite-dimensional, \cref{thm:isomorphic-saturation} gives the first line of \eqref{eq:exact-isomorphic-formula}.

Suppose next that $\dim X=N<\infty$.  Then $\tCNJ(X)=1$, so the preceding lower bound gives
\[
 \tCNJ(X\widehat\otimes_\alpha Y)\ge\tCNJ(Y).
\]
Fix $\delta>0$ and choose an equivalent norm $|\cdot|_Y$ on $Y$ such that
\[
 \CNJ(Y,|\cdot|_Y)\le\tCNJ(Y)+\delta.
\]
After fixing a basis $(e_i)_{i=1}^N$ of $X$, write uniquely
$w=\sum_{i=1}^N e_i\otimes y_i$ and define
\[
 |\!|\!|w|\!|\!|_2
 =\left(\sum_{i=1}^N|y_i|_Y^2\right)^{1/2}.
\]
Because $X$ is finite-dimensional, this norm is equivalent to either canonical tensor norm.  Coordinatewise application of the von Neumann--Jordan inequality in $(Y,|\cdot|_Y)$ gives
\[
 \CNJ(X\widehat\otimes_\alpha Y,|\!|\!|\cdot|\!|\!|_2)
 \le \CNJ(Y,|\cdot|_Y)
 \le\tCNJ(Y)+\delta.
\]
Taking the infimum over tensor-product renormings and then letting $\delta\downarrow0$ proves
\[
 \tCNJ(X\widehat\otimes_\alpha Y)=\tCNJ(Y).
\]
The case $\dim Y<\infty$ is symmetric.  The final characterization follows from \eqref{eq:renorm-super}.
\end{proof}

\begin{corollary}\label{cor:iterated-isomorphic-formula}
Let $X_1,\ldots,X_s$ be non-zero Banach spaces, let $\alpha_j\in\{\eps,\pi\}$ for $1\le j<s$, and define recursively
\[
 Z_1=X_1,\qquad Z_{j+1}=Z_j\widehat\otimes_{\alpha_j}X_{j+1}.
\]
Then
\[
 \tCNJ(Z_s)=
 \begin{cases}
  1, & \text{all }X_j\text{ are finite-dimensional},\\
  \tCNJ(X_k), & \text{exactly one factor }X_k\text{ is infinite-dimensional},\\
  2, & \text{at least two factors are infinite-dimensional}.
 \end{cases}
\]
\end{corollary}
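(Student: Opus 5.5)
The proof goes by induction on $s$, with \cref{thm:complete-renorming} applied at each step $Z_{j+1}=Z_j\widehat\otimes_{\alpha_j}X_{j+1}$. The induction hypothesis records two things about $Z_j$: the value of $\tCNJ(Z_j)$ given by the three-case formula for $X_1,\ldots,X_j$, and the fact that $Z_j$ is finite-dimensional exactly when all of $X_1,\ldots,X_j$ are. The second point follows because the completed tensor product of non-zero spaces is finite-dimensional if and only if both factors are. The base case $s=1$ is immediate: if $X_1$ is finite-dimensional, then $\tCNJ(X_1)=1$, since any finite-dimensional space admits a Euclidean equivalent norm. If $X_1$ is infinite-dimensional, the middle line holds trivially.

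For the inductive step, I compare the number of infinite-dimensional factors among $X_1,\ldots,X_j$ with that among $X_1,\ldots,X_{j+1}$.

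\emph{Case 1: all of $X_1,\ldots,X_{j+1}$ are finite-dimensional.} Then $Z_{j+1}$ is finite-dimensional, and the envelope is $1$.

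\emph{Case 2: exactly one factor among $X_1,\ldots,X_{j+1}$ is infinite-dimensional.} There are two subcases.
\begin{itemize}
\item If that factor is $X_{j+1}$, then $Z_j$ is finite-dimensional, and the second line of \eqref{eq:exact-isomorphic-formula} gives $\tCNJ(Z_{j+1})=\tCNJ(X_{j+1})$.
\item If it is some $X_k$ with $k\le j$, then $X_{j+1}$ is finite-dimensional, and the third line gives $\tCNJ(Z_{j+1})=\tCNJ(Z_j)$. This equals $\tCNJ(X_k)$ by the induction hypothesis.
\end{itemize}

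\emph{Case 3: at least two factors among $X_1,\ldots,X_{j+1}$ are infinite-dimensional.} There are three subcases.
\begin{itemize}
\item If $Z_j$ and $X_{j+1}$ are both infinite-dimensional, the first line of \eqref{eq:exact-isomorphic-formula} gives $2$.
\item If $X_{j+1}$ is finite-dimensional, then $Z_j$ already contains at least two infinite-dimensional factors. The third line then gives $\tCNJ(Z_{j+1})=\tCNJ(Z_j)=2$ by induction.
\item The remaining configuration, $Z_j$ finite-dimensional with $X_{j+1}$ infinite-dimensional, cannot occur, because then only one factor would be infinite-dimensional.
\end{itemize}

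The only point needing care is that \cref{thm:complete-renorming} is stated for arbitrary non-zero Banach spaces. It may therefore be applied with $X=Z_j$, itself a completed tensor product, and $Y=X_{j+1}$. Similarly, the dimension bookkeeping for $Z_j$ relies only on the fact that $Z_j$ contains canonical isometric copies of each factor, via the crossnorm property. So the argument reduces to careful case tracking, and I expect no genuine obstacle beyond stating the strengthened induction hypothesis about finite-dimensionality of $Z_j$.
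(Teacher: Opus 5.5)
Your proposal is correct and follows the paper's own argument. Both proceed by induction, applying \cref{thm:complete-renorming} at each step $Z_{j+1}=Z_j\widehat\otimes_{\alpha_j}X_{j+1}$. The accumulated product stays finite-dimensional until the first infinite-dimensional factor appears, the second and third lines of the formula preserve the envelope under finite-dimensional tensoring, and the first line forces the value $2$ once a second infinite-dimensional factor enters. Your explicit case split and the strengthened hypothesis on $\dim Z_j$ just make precise the bookkeeping the paper leaves implicit.
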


\begin{proof}
Apply \cref{thm:complete-renorming} inductively.  Before the first infinite-dimensional factor appears, the accumulated tensor product is finite-dimensional and has envelope one.  Tensoring the unique infinite-dimensional factor with any number of non-zero finite-dimensional factors leaves its envelope unchanged by the second and third lines of \eqref{eq:exact-isomorphic-formula}.  Once a second infinite-dimensional factor appears, the first line of \eqref{eq:exact-isomorphic-formula} forces the value two, and all subsequent tensoring steps preserve that value.
\end{proof}

\begin{remark}\label{rem:short-renorming-proof}
The factor inequality does not by itself prove either the qualitative dichotomy or the exact formula \eqref{eq:exact-isomorphic-formula}; its role is to make the factor lower bounds automatic.  The genuinely tensorial obstruction is the persistence of uniformly large $\ell_\infty^n$ or $\ell_1^n$ blocks in \cref{thm:isomorphic-saturation}, while the finite-factor equality in \eqref{eq:exact-isomorphic-formula} uses the explicit Hilbertian sum renorming from the proof of \cref{thm:complete-renorming}.

If one instead takes the established tensor-product superreflexivity theorem as an input, then only the strict-threshold part has the shorter corollary proof
\[
 \tCNJ(X\widehat\otimes_\alpha Y)<2
 \overset{\eqref{eq:renorm-super}}{\Longleftrightarrow}
 X\widehat\otimes_\alpha Y\text{ is superreflexive}
\]
\[
 \Longleftrightarrow
 X,Y\text{ are superreflexive and }
 \min\{\dim X,\dim Y\}<\infty
 \overset{\eqref{eq:renorm-super}}{\Longleftrightarrow}
 \begin{gathered}
  \tCNJ(X)<2,\quad \tCNJ(Y)<2,\\
  \min\{\dim X,\dim Y\}<\infty.
 \end{gathered}
\]
Here the middle equivalence is the theorem of Rueda Zoca, equivalently the formulation obtained from the type--cotype results of Bu--Dowling \cite{RuedaZoca2025,BuDowling2026}.  This shortcut does not recover the exact numerical finite-factor identity or the iterated formula in \cref{cor:iterated-isomorphic-formula}; those are genuinely stronger than the qualitative superreflexivity dichotomy.
\end{remark}

We conclude the discussion by relating the tensor geometry developed above to Hilbertian and weak-Hilbert structures, which provide a natural endpoint for these investigations.

The preceding superreflexivity theorem also gives an efficient entry point to Pisier's weak Hilbert geometry.  We use the following standard geometric formulation.

\begin{definition}\label{def:weak-hilbert}
A Banach space $W$ is called \emph{weak Hilbert} if there are constants $\delta\in(0,1]$ and $C\ge1$ such that every finite-dimensional subspace $E\subset W$ contains a subspace $F\subset E$ for which
\[
 \dim F\ge\delta\dim E,
 \qquad d(F,\ell_2^{\dim F})\le C,
\]
and there is a bounded projection $P:W\to F$ with $\|P\|\le C$.
\end{definition}

This is one of Pisier's equivalent descriptions of weak Hilbert spaces.  We shall use only the standard facts that the class is invariant under isomorphism, is stable under closed subspaces and finite direct sums, and every weak Hilbert space is superreflexive; see \cite{Pisier1988,Pisier1989}.

\begin{theorem}\label{thm:metric-hilbert-rigidity}
Let $X,Y$ be non-zero Banach spaces and let $\alpha\in\{\eps,\pi\}$.  Then
\[
 X\widehat\otimes_\alpha Y\text{ is a Hilbert space with its canonical tensor norm}
\]
if and only if one factor is one-dimensional and the other factor is a Hilbert space.
\end{theorem}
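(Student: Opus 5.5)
We prove the two implications separately; the reverse direction is a routine identification, and the forward direction reduces to \cref{cor:non-hilbert-tensor-obstruction}.

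\emph{Sufficiency.} Suppose, say, $\dim X=1$, so $X=\mathbb K x_0$ with $\|x_0\|=1$, and $Y$ is a Hilbert space. Every tensor in $X\otimes Y$ can be written uniquely as $x_0\otimes y$ with $y\in Y$. Both canonical norms are crossnorms, so for $\alpha\in\{\eps,\pi\}$ we get $\|x_0\otimes y\|_\alpha=\|y\|$. The map $y\mapsto x_0\otimes y$ is therefore a linear surjective isometry from $Y$ onto $X\widehat\otimes_\alpha Y$; surjectivity holds because the algebraic tensor product is already complete. Hence the tensor product is a Hilbert space. The case $\dim Y=1$ is symmetric.

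\emph{Necessity.} Assume $X\widehat\otimes_\alpha Y$ is a Hilbert space with its canonical norm, so $\CNJ(X\widehat\otimes_\alpha Y)=1$.
\begin{enumerate}[label=\textup{(\arabic*)}]
\item If $\dim X\ge2$ and $\dim Y\ge2$, then \cref{cor:non-hilbert-tensor-obstruction} gives $\CNJ(X\widehat\otimes_\alpha Y)>1$, a contradiction. Hence $\min\{\dim X,\dim Y\}\le1$, and since both factors are non-zero, one of them, say $X$, is one-dimensional.
\item Fix $x_0\in S_X$. The canonical map $y\mapsto x_0\otimes y$ embeds $Y$ isometrically into the tensor product (in fact onto it, by the sufficiency argument).
\item By \cref{lem:basic-cnj}, $\CNJ(Y)\le\CNJ(X\widehat\otimes_\alpha Y)=1$. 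By the Jordan--von Neumann characterization, $Y$ is a Hilbert space. Alternatively, $Y$ is isometric to a closed subspace of a Hilbert space and so inherits the parallelogram law directly.
\end{enumerate}
The case where $Y$ is the one-dimensional factor is handled the same way, with the roles of $X$ and $Y$ exchanged.

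\emph{Main obstacle.} The only nontrivial input is step (1), namely \cref{cor:non-hilbert-tensor-obstruction}. If its proof needs strengthening, I would argue as follows. Take two-dimensional subspaces $E\subset X$ and $F\subset Y$. Then $E\widehat\otimes_\alpha F$ sits inside $X\widehat\otimes_\alpha Y$ isometrically for $\alpha=\eps$. For $\alpha=\pi$ this is not automatic, but every four-dimensional subspace of a Hilbert space is Euclidean, so it suffices to find an isometric copy of some non-Euclidean finite-dimensional space.

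A cleaner route avoids subspaces altogether. If $X\widehat\otimes_\alpha Y$ were Hilbert, then $x\mapsto x\otimes y_0$ shows $X$ is Hilbert, and likewise $Y$ is Hilbert. Then \cref{thm:two-plane} with $\eta_2(X)=\eta_2(Y)=1$ (and the same for the duals) gives $\CNJ\ge2$, which is impossible for a Hilbert space. This second argument is the one I would actually write, since it relies only on \cref{lem:basic-cnj}, \cref{thm:two-plane} and \cref{thm:hilbert-cert}.
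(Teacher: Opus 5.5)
Your proof is correct, and the version you say you would actually write (the crossnorm factor embeddings force $X$ and $Y$ to be Hilbert, and then both factors having dimension at least two contradicts $\CNJ=1$) is exactly the paper's argument. The only difference is that the paper applies \cref{thm:hilbert-cert} directly to the two Hilbert factors. That is shorter than going through \cref{thm:two-plane} with $\eta_2=1$, which is itself derived from \cref{thm:hilbert-cert}, and it also avoids relying on \cref{cor:non-hilbert-tensor-obstruction}, whose rigorous justification is in fact this same case split.
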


\begin{proof}
If, say, $\dim Y=1$, then the canonical map $x\mapsto x\otimes y_0$, for $y_0\in S_Y$, is an onto isometry from $X$ onto either completed tensor product, so the sufficiency is immediate.

Conversely, suppose $X\widehat\otimes_\alpha Y$ is Hilbert.  The canonical factor embeddings are isometries, hence $X$ and $Y$ themselves are Hilbert spaces.  If both had dimension at least two, \cref{thm:hilbert-cert} would give
\[
 \CNJ(X\widehat\otimes_\alpha Y)=2,
\]
whereas a Hilbert norm has von Neumann--Jordan constant one.  Thus one factor must be one-dimensional.
\end{proof}

\begin{theorem}\label{thm:weak-hilbert-tensor}
	Let $X,Y$ be non-zero Banach spaces and let
	$\alpha\in\{\varepsilon,\pi\}$. Then
	\[
	X\widehat{\otimes}_{\alpha}Y
	\quad\text{is weak Hilbert}
	\]
	if and only if one factor is finite-dimensional and the other
	factor is weak Hilbert.
\end{theorem}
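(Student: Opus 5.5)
We prove the two directions separately, using the standard facts recalled after \cref{def:weak-hilbert}: weak Hilbert is invariant under isomorphism, passes to closed subspaces and finite direct sums, and implies superreflexivity.

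\emph{Necessity.} Suppose $W=X\widehat\otimes_\alpha Y$ is weak Hilbert. Then $W$ is superreflexive, so by \eqref{eq:renorm-super} we have $\tCNJ(W)<2$. \Cref{thm:complete-renorming} (equivalently \cref{thm:isomorphic-saturation}) then shows that $\min\{\dim X,\dim Y\}<\infty$; say $\dim Y<\infty$. For the other factor, fix $y_0\in S_Y$. The map $x\mapsto x\otimes y_0$ is an isometric embedding of $X$ onto a closed subspace of $W$, by the crossnorm identity and the fact that $\|\sum x_j\otimes y_0\|_\alpha=\|\sum x_j\|\|y_0\|$. Since weak Hilbert passes to closed subspaces, $X$ is weak Hilbert. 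The case $\dim X<\infty$ is symmetric.

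\emph{Sufficiency.} Assume $\dim Y=N<\infty$ and $X$ is weak Hilbert. Fix a basis $(f_i)_{i=1}^N$ of $Y$. The map
\[
\Phi:X^N\to X\widehat\otimes_\alpha Y,\qquad \Phi(x_1,\ldots,x_N)=\sum_{i=1}^N x_i\otimes f_i,
\]
is a linear bijection onto the algebraic tensor product. Because $Y$ is finite-dimensional, the algebraic tensor product is already complete. To see this, note that $\|\Phi(x)\|_\alpha\le\sum\|x_i\|\|f_i\|$. For a lower bound, take the biorthogonal functionals $f_i^*\in Y^*$ and let $x^*$ range over $B_{X^*}$. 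Then
\[
\|\Phi(x)\|_\varepsilon\ge \sup_{x^*}|x^*(x_i)|/\|f_i^*\|=\|x_i\|/\|f_i^*\|,
\]
and the projective norm dominates the injective one. Thus $\Phi$ is an isomorphism from $X\oplus\cdots\oplus X$ ($N$ copies, any $\ell_p$-sum norm) onto $W$. Finite direct sums of weak Hilbert spaces are weak Hilbert, and the class is isomorphism-invariant, so $W$ is weak Hilbert.

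The argument is essentially bookkeeping. The only step that needs care is the two-sided estimate for $\Phi$: the lower bound must hold for both tensor norms, and it does because of the domination $\|\cdot\|_\varepsilon\le\|\cdot\|_\pi$. Beyond that, correctness depends on the cited facts about weak Hilbert spaces (isomorphism invariance, heredity to closed subspaces, finite sums, superreflexivity), together with the superreflexivity dichotomy of \cref{thm:complete-renorming}.
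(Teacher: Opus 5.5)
Your proposal is correct and follows the paper's proof. Necessity goes through the isometric factor copies together with the superreflexivity dichotomy for tensor products, and sufficiency identifies $X\widehat\otimes_\alpha Y$ with a finite direct sum of copies of the infinite-dimensional factor. The differences are cosmetic: you get $\min\{\dim X,\dim Y\}<\infty$ from the paper's own renorming result (\cref{thm:complete-renorming} with \eqref{eq:renorm-super}) rather than citing Rueda Zoca's theorem directly, and you write out the two-sided estimate for $\Phi$ (biorthogonal functionals plus $\|\cdot\|_\eps\le\|\cdot\|_\pi$), which the paper only asserts.
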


\begin{proof}
	Assume first that
	$Z=X\widehat{\otimes}_{\alpha}Y$ is weak Hilbert.
	Since the canonical copies of $X$ and $Y$ are closed isometric
	subspaces of $Z$, the stability of weak Hilbert spaces under closed
	subspaces implies that both $X$ and $Y$ are weak Hilbert
	\cite{Pisier1988,Pisier1989}. In particular, $X$, $Y$, and $Z$
	are superreflexive. The tensor-product superreflexivity theorem of
	Rueda Zoca \cite{RuedaZoca2025} therefore yields
	\[
	\min\{\dim X,\dim Y\}<\infty.
	\]
	Hence one factor is finite-dimensional and the other is weak Hilbert.
	
	Conversely, suppose, for instance, that $\dim X=N<\infty$ and that
	$Y$ is weak Hilbert. Fix a basis $(e_i)_{i=1}^{N}$ of $X$. The map
	\[
	\Phi:Y^N\longrightarrow X\otimes Y,
	\qquad
	\Phi(y_1,\ldots,y_N)=\sum_{i=1}^{N}e_i\otimes y_i,
	\]
	is a linear bijection. Since $X$ is finite-dimensional, either
	canonical tensor norm is equivalent, through $\Phi$, to a fixed
	product norm on $Y^N$; consequently,
	\[
	X\widehat{\otimes}_{\alpha}Y\cong Y^N.
	\]
	Finite direct sums of weak Hilbert spaces are weak Hilbert, and the
	property is invariant under isomorphisms
	\cite{Pisier1988,Pisier1989}. Thus
	$X\widehat{\otimes}_{\alpha}Y$ is weak Hilbert.
	The case $\dim Y<\infty$ is symmetric.
\end{proof}

\begin{corollary}\label{cor:weak-hilbert-maximal-cnj}
Let $W$ be an infinite-dimensional weak Hilbert space and let $F$ be a finite-dimensional Hilbert space with $\dim F\ge2$.  Then, for $\alpha\in\{\eps,\pi\}$,
\[
 F\widehat\otimes_\alpha W\text{ is weak Hilbert},
 \qquad
 \CNJ(F\widehat\otimes_\alpha W)=2.
\]
\end{corollary}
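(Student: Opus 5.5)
The corollary has two independent assertions, and I would prove them separately, using results already established above.

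For the weak Hilbert part, I would apply \cref{thm:weak-hilbert-tensor} with the factor pair $(F,W)$. By hypothesis $F$ is finite-dimensional and $W$ is weak Hilbert, so the sufficiency direction of that theorem applies directly. Concretely, $F\widehat\otimes_\alpha W\cong W^{\dim F}$, and a finite direct sum of weak Hilbert spaces is weak Hilbert. There is one small bookkeeping point. \cref{thm:weak-hilbert-tensor} is stated with $X$ as the first factor, so I would note either that the tensor products are symmetric up to the canonical isometric flip $x\otimes y\mapsto y\otimes x$, or that its proof explicitly covers the case $\dim Y<\infty$ symmetrically.

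For the equality $\CNJ=2$, I would use the Hilbertian structure of $F$ together with Dvoretzky's theorem in $W$. Since $\dim F\ge2$, we have $\eta_2(F)=\eta_2(F^*)=1$, because $F$ and $F^*$ are Euclidean. Since $W$ is infinite-dimensional, Dvoretzky's theorem gives $\eta_2(W)=\eta_2(W^*)=1$; here $W^*$ is also infinite-dimensional. Plugging these values into \eqref{eq:epsbound} and \eqref{eq:pibound} of \cref{thm:two-plane} gives the lower bound $2/(1\cdot1)=2$ for both $F\oteps W$ and $F\otpi W$. Combined with the universal upper bound $\CNJ\le2$, this yields equality. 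This is again modulo the flip isometry, which also interchanges the factor order in \cref{thm:two-plane}.

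As a cross-check I would also use \cref{thm:ocp}: $\ell_2^d$ and its dual plainly have OCP, since the identity is a contraction sending $e_1,e_2$ to orthogonal unit vectors. Then \eqref{eq:mixed-operator-models} gives the same conclusion for every infinite-dimensional partner.

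The main obstacle is not technical but conceptual: one must make sure the two conclusions do not contradict each other. Every weak Hilbert space is superreflexive, so $F\widehat\otimes_\alpha W$ admits an equivalent norm with $\CNJ<2$, by \eqref{eq:renorm-super}. The point to stress in the write-up is that $\CNJ=2$ here refers to the canonical tensor norm only, which is exactly the distinction between $\CNJ$ and the renorming envelope $\tCNJ$. Indeed, \cref{thm:complete-renorming} gives $\tCNJ(F\widehat\otimes_\alpha W)=\tCNJ(W)<2$. I would close by noting that this shows $\CNJ=2$ for the canonical norm is compatible with weak Hilbert (hence isomorphic) structure.
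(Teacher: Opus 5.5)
Your proof is correct. The weak Hilbert half is the paper's argument. Since \cref{thm:weak-hilbert-tensor} is phrased symmetrically, no flip is needed there.

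For $\CNJ=2$, your main route differs from the paper's. The paper argues exactly as in your ``cross-check'': $F$ and $F^*$ have OCP, and it applies \cref{thm:ocp}, \eqref{eq:mixed-operator-models} and symmetry of the tensor norms. You instead insert $\eta_2(F)=\eta_2(F^*)=1$ and the Dvoretzky values $\eta_2(W)=\eta_2(W^*)=1$ into \eqref{eq:epsbound} and \eqref{eq:pibound}. In effect you rerun the proof of \cref{thm:rigidity}, noticing that it only uses almost Euclidean two-planes in each factor, not infinite dimensionality of both.

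Underneath, both routes produce essentially the same witness. It is the diagonal sign pattern $\mathrm{diag}(1,1)$, $\mathrm{diag}(1,-1)$ of \cref{thm:hilbert-cert}, transported through an almost Euclidean section of $W$ (or of $W^*$ on the projective side). What differs is which general estimate packages it.

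Your packaging uses only injectivity of $\eps$, the duality $(F\otpi W)^*=\Lop(F,W^*)\cong F^*\oteps W^*$, and self-duality of $\CNJ$. However, it needs $\eta_2(F)=\eta_2(F^*)=1$.

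The OCP packaging is the one that survives for non-Euclidean finite-dimensional factors. For example, $\ell_p^2$ with $1<p<\infty$, $p\neq2$, has OCP, so \cref{thm:ocp} still gives $\CNJ=2$ against every infinite-dimensional partner. For a Hilbert partner, \cref{thm:two-plane} would only give a lower bound strictly below $2$, since $\CNJ(\ell_p^2)<2$ and $\eta_2(\ell_p^2)=2^{|1/2-1/p|}>1$. For the Hilbert factor $F$ of this corollary, both routes are complete.

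Your closing remark on $\CNJ$ versus $\tCNJ$ is accurate; it is the point of \cref{rem:weak-hilbert-gate}. It is not needed for the proof.
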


\begin{proof}
The weak-Hilbert assertion follows from \cref{thm:weak-hilbert-tensor}.  Since both $F$ and $F^*$ have OCP, the mixed-dimensional criterion \cref{thm:ocp}, together with symmetry of the two tensor norms, gives the endpoint identity for both $\eps$ and $\pi$.
\end{proof}

\begin{remark}\label{rem:weak-hilbert-gate}
\cref{thm:metric-hilbert-rigidity,thm:weak-hilbert-tensor} isolate two different thresholds.  Metric Hilbert structure survives tensorization only through a one-dimensional partner, whereas weak Hilbert structure survives precisely through a finite-dimensional partner.  Taking $F=\ell_2^2$ and $W=H$ infinite-dimensional Hilbert gives an especially transparent example: both $\ell_2^2\oteps H$ and $\ell_2^2\otpi H$ are isomorphic to the Hilbert space $H\oplus H$, yet their canonical tensor norms have $\CNJ=2$ by \cref{cor:weak-hilbert-maximal-cnj}.  Thus maximal von Neumann--Jordan defect is compatible even with a Hilbertizable tensor product.
\end{remark}

\section*{Declarations}
\noindent\textbf{Data availability.} No datasets were generated or analyzed in this work.\\
\textbf{Conflict of interest.} The author declares no conflict of interest.

\section*{Acknowledgements}
The authors thank the anonymous referees for their valuable comments.
Thanks to all the members of the Functional Analysis Research team of the College of Mathematics and Statistics of Anqing Normal University for their discussion and correction of the difficulties and errors encountered in this paper.

\end{document}